\documentclass[10pt,reqno]{amsart}

\usepackage[margin=1in]{geometry}
\usepackage{amsmath,amssymb,amsthm,mathtools}
\usepackage{microtype}
\usepackage[hidelinks]{hyperref}

\hypersetup{colorlinks=true,linkcolor=red,citecolor=blue,urlcolor=blue}

\newtheorem{theorem}{Theorem}[section]
\newtheorem{proposition}[theorem]{Proposition}
\newtheorem{lemma}[theorem]{Lemma}
\newtheorem{corollary}[theorem]{Corollary}
\newtheorem{definition}[theorem]{Definition}
\newtheorem{assumption}[theorem]{Assumption}
\theoremstyle{remark}
\newtheorem{remark}[theorem]{Remark}

\newcommand{\C}{\mathbb C}
\newcommand{\E}{\mathbb E}
\newcommand{\R}{\mathbb R}
\newcommand{\Q}{\mathbb Q}
\newcommand{\D}{\mathcal D}
\newcommand{\Prob}{\mathbb P}
\newcommand{\dd}{\,d}
\newcommand{\logm}{\log^-}
\newcommand{\logp}{\log^+}
\newcommand{\weak}{\Rightarrow}

\title{Convergence for Small-Order Derivatives of Random Polynomials with Independent Roots}
\author{Hongkai Zhu}
\address{Department of Mathematics, University of Colorado, Boulder, CO}
\email{hongkai.zhu@colorado.edu}

\begin{document}

\begin{abstract}
Let $X_1,X_2,\ldots$ be i.i.d. complex-valued random variables with arbitrary Borel probability law $\mu$, and set $P_n(z)=\prod_{j=1}^n(z-X_j)$. For every deterministic sequence $k_n=o(n)$, we prove that the empirical zero measure of the $k_n$-th derivative $P_n^{(k_n)}$ converges weakly to $\mu$ almost surely.
\end{abstract}
\maketitle

\section{Introduction}
Let $X_1,X_2,\ldots$ be i.i.d. complex random variables with common law $\mu$. Consider a random polynomial
\[P_n(z):=\prod_{j=1}^n (z-X_j).\] 
For $0\le k<n$, let
\[\mu_{n,k}:=\frac{1}{n-k}\sum_{z:\in\C: P^{(k)}_n(z)=0}\delta_z\] where zeros are counted with multiplicity.

Our main result is the following.
\begin{theorem}\label{thm:main}
    Let $\mu$ be an arbitrary Borel probability measure on $\C$. If $k_n=o(n)$. Then 
    \[\mu_{n,k_n}\weak \mu\qquad\text{almost surely}.\]
\end{theorem}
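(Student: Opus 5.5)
Our approach is the logarithmic potential method, with no moment or integrability assumption on $\mu$. For $z\in\C$ write
\[
\frac{1}{n}\log|P_n^{(k)}(z)| \;=\; \frac1n\log|P_n(z)| + \frac1n\log\Bigl|\frac{P_n^{(k)}(z)}{P_n(z)}\Bigr|.
\]
The first term is $\frac1n\sum_j\log|z-X_j|$. The zero measure of $P_n^{(k_n)}$ has total mass $n-k_n$, and $(n-k_n)/n\to1$. It therefore suffices to prove that for Lebesgue-a.e.\ $z$ the correction term tends to $0$, in a form strong enough to pass to distributions.

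\textbf{Step 1 (reduction).} We want a convergence statement for $\frac{1}{n}\log|P_n^{(k_n)}|$ that does not need $\int\log|z-w|\,d\mu(w)$ to be finite. We do this by tightness and subsequences. First, $\mu_{n,k_n}$ is a.s.\ tight. By Gauss--Lucas and its refinements, zeros of derivatives lie in the convex hull of the roots. Tightness, however, needs more: a quantitative statement that a fraction $1-o(1)$ of the critical points of order $k_n$ stay within a bounded region whenever most of the roots do. This should follow from a counting argument of Jensen type on large discs.

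Next we use the criterion of the form ``$\frac1n\log|f_n|-\frac1n\log|g_n|\to0$ in probability/a.s.\ for a.e.\ $z$ together with uniform local $L^2$ (or $L^1$) bounds implies that $\nu_{f_n}-\nu_{g_n}\to0$ vaguely.'' This is the Tao--Vu replacement principle. We apply it with $f_n=P_n^{(k_n)}$ and $g_n=P_n$, and use the strong law of large numbers for the empirical root measure $\frac1n\sum\delta_{X_j}\weak\mu$ a.s. To reach almost sure convergence we need summable error estimates, or an a.s.\ version of the replacement principle obtained by working on a countable dense set of test functions.

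\textbf{Step 2 (upper bound).} We need $\limsup\frac1n\log|P^{(k)}_n/P_n|\le0$. Write
\[
\frac{P_n^{(k)}}{P_n}(z)=k!\,e_k\Bigl(\frac{1}{z-X_1},\dots,\frac{1}{z-X_n}\Bigr),
\]
where $e_k$ is the elementary symmetric polynomial, so that
\[
|e_k|\le\binom nk\max_j|z-X_j|^{-k}.
\]
The bound is then $\frac1n\bigl(\log\bigl(k!\binom nk\bigr)+k\max_j\log^-|z-X_j|\bigr)$. Since $k_n=o(n)$, we have $\frac1n\log\frac{n!}{(n-k)!}\le\frac{k\log n}{n}$. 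This is not $o(1)$ unless $k\log n=o(n)$, so we must be more careful. The remedy is the correct normalization: $P^{(k)}$ has leading coefficient $n!/(n-k)!$, and a constant factor does not change the zeros. We therefore divide by $n!/(n-k)!$ and use
\[
\frac{e_k}{\binom nk}\le\Bigl(\frac{1}{n}\sum_j|z-X_j|^{-1}\Bigr)^k
\]
by Maclaurin's inequality. The resulting term is $\frac kn\log\bigl(\frac1n\sum_j|z-X_j|^{-1}\bigr)$. For a.e.\ $z$, with high probability, $\min_j|z-X_j|\ge n^{-3}$, so this term is $O\bigl(\frac{k\log n}{n}\bigr)$. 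That is still not enough. Instead we will bound $\frac1n\sum_j|z-X_j|^{-1}$ by a truncated average, which is $O(\log n)$ with probability $1-O(n^{-2})$ for Lebesgue-typical $z$. This gives $\frac kn\log\log n$. To get down to $o(1)$ in general, note that on compact sets $\int|z-w|^{-1}\,d\mu(w)$ is locally integrable in $z$. Hence for a.e.\ $z$ the average stays bounded in $\limsup$, and the upper bound is $O(k/n)=o(1)$.

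\textbf{Step 3 (lower bound, the main obstacle).} We must rule out $\frac1n\log\bigl|e_k(1/(z-X_j))/\binom nk\bigr|\ll0$, that is, anti-concentration of the symmetric function. Our plan is an induction on $k$ using the identity
\[
\frac{P^{(m+1)}}{P^{(m)}}=\sum_{\text{zeros }w\text{ of }P^{(m)}}\frac{1}{z-w}.
\]
This expresses $\log|P^{(k)}|$ as a telescoping sum of $k$ terms, each equal to $\log\bigl|\sum_w (z-w)^{-1}\bigr|$, that is, the logarithm of the Cauchy transform of the empirical measure of the zeros of $P^{(m)}$. For a.e.\ $z$ (Lebesgue), Cauchy transforms vanish only on small sets. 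Integrated against a test function, $\int\log|m_{\nu}(z)|\,\phi(z)\,dz$ is bounded below uniformly over probability measures $\nu$ with tight support, since $\log|\cdot|$ of a Cauchy transform has a controlled negative part in $L^1_{\rm loc}$. Summing $k=o(n)$ such terms, each $O(1)$ in $L^1_{\rm loc}$ after dividing by $n$, gives $\frac1n\int\log|P^{(k)}/P|\,\phi=O(k/n)\to0$ deterministically. This is enough for the vague convergence, since zeros are tested against $\Delta\phi$. The main difficulty is proving this uniform local $L^1$ lower bound on $\log|m_\nu|$ over all $\nu$ in a tight family; we would try the standard argument from Kabluchko's work on critical points.
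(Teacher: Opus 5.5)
\textbf{The gap is in Step 3.} The uniform lower bound you rely on is false. For $\nu$ the uniform measure on the unit circle, $m_\nu(z)=\int (z-w)^{-1}\,d\nu(w)$ vanishes identically on the unit disk. For $\nu$ the empirical measure of the $N$-th roots of unity, $m_\nu(z)=z^{N-1}/(z^N-1)$, so $\int_{|z|<1/2}\log|m_\nu(z)|\,d^2z=-\Theta(N)$. Both measures live in a fixed compact set.

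More fundamentally, you conclude that the lower bound holds \emph{deterministically}, but the deterministic theorem is false. The polynomial $P_n(z)=z^n-1$ has root measure converging to the uniform measure on the circle, while all zeros of $P_n'=nz^{n-1}$ sit at $0$. Your Step 2 upper bound does hold for this sequence on $|z|<1/2$. So the randomness of the $X_j$ must enter the lower bound, and your sketch never uses it there.

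Even with randomness, the per-term bound is wrong. For $\mu$ uniform on the circle and $|z|<1$, $\frac1n\sum_j(z-X_j)^{-1}$ has mean $0$ and variance $\frac{1}{n(1-|z|^2)}$, so already the $m=0$ term of your telescoping sum is about $-\frac12\log n$ on the disk. A crude bound of $-C\log n$ per step only gives $O(k_n\log n/n)$, which does not vanish across the range $k_n=o(n)$. Moreover, for $m\ge 1$ the zeros of $P_n^{(m)}$ are not an i.i.d.\ sample, so Kabluchko's anti-concentration argument for i.i.d.\ sums has nothing to act on.

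\textbf{What the paper does instead.} It avoids telescoping altogether. It normalizes by the coefficient norm,
$\varphi_n=\frac1n\log(|P_n|/\|P_n\|_{\mathrm c,n})$ and $\psi_n=\frac1n\log(|P_n^{(k_n)}|/((n)_{k_n}\|P_n\|_{\mathrm c,n}))$. It also proves that $\frac1n\log(\|\mathcal D_{n,k_n}P_n\|_{\mathrm c,n}/\|P_n\|_{\mathrm c,n})$ stays bounded below. Together these replace your vague Step 1 when $\mu$ has no logarithmic moment. Tightness is then automatic from vague convergence to a probability measure plus convergence of total masses, so no Jensen-type counting is needed.

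Your Step 2 is essentially the paper's positive-part lemma, in the integrated form $\frac1n\int_K\log^+|R_{n,k}|\le C_K\,k/n$. This gives $\psi\le\varphi$ for subsequential limits.

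The real work is showing the gap $\varphi-\psi$ vanishes.
\begin{itemize}
\item If $\mu$ is polar-carried, this follows from removability of polar singularities.
\item Otherwise, a nonzero gap forces, via Cauchy estimates, a whole block of $\lfloor\delta n\rfloor$ consecutive ratios $R_{n,r}(z)$ to be at most $e^{-\varepsilon n}$ on a set of positive area. The paper then splits off a component $a\lambda\le\mu$ with $L_\lambda<\infty$ and inverts the far roots. This yields a polynomial $v$ with $v(0)=1$, lying in a background-determined space of dimension at most $k_n$, that is exponentially small at $\theta n$ conditionally i.i.d.\ points. Warren's theorem plus the Vapnik--Chervonenkis inequality bound this postselected event by $e^{-cn}$.
\end{itemize}
These summable bounds are also what give almost-sure convergence, which your replacement-principle route leaves open.
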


Pemantle and Rivin~\cite{PR13} initiated the probabilistic study of critical points in this model and conjectured that, for $k=1$, the empirical critical-point measure converges weakly in probability to $\mu$; they proved this under the assumption that $\mu$ has finite $1$-energy. Subramanian~\cite{Sub12} proved convergence in probability for arbitrary laws supported on the unit circle, with almost-sure convergence in the nonuniform case. Kabluchko~\cite{Kab15} subsequently established the conjecture for arbitrary $\mu$; Angst, Malicet, and Poly~\cite{AMP24} later upgraded the first-derivative result to almost sure convergence. 

For higher derivatives, say $k=k(n)$-th derivatives, Cheung, Ng, Tsai, and Yam~\cite{CNTY15} established almost sure convergence for all unit-circle laws and for fixed $k$. Byun, Lee, and Reddy~\cite{BLR22} proved convergence in probability for fixed $k$; Michelen and Vu~\cite{MV24a,MV24b} proved convergence in probability up to $k\le \log n/(5\log\log n)$ and almost sure convergence for fixed $k$. 

The recent work of Angst, Nguyen, and Poly~\cite{ANP26} proves almost sure convergence for all $k_n=o(n)$ when $\mu$ is discrete, and for $k_n=o(n/\log n)$ when $\mu$ is dimension-nondegenerate. Our theorem extends the full \(k_n=o(n)\) range to arbitrary Borel probability measures.

\subsection{Proof strategy and organization}
The proof is based on logarithmic potentials from planar potential theory. Fix a subsequential vague limit $\beta$ of the $n$-normalized zero measures of $P_n^{(k_n)}$. After normalizing the polynomials and passing to a further subsequence, we obtain limiting logarithmic potentials $\varphi$ and $\psi$ for $P_n$ and $P_n^{(k_n)}$, respectively. Their normalized distributional Laplacians are $\mu$ and $\beta$. A deterministic estimate for the positive part of the logarithm of the derivative ratio implies
\[\psi\leq\varphi\qquad\text{almost everywhere}.\]

Thus it remains to prove that necessarily \(\beta=\mu\). To this end, define the nonnegative potential gap
\[\Gamma:=\phi-\psi\geq 0\qquad\text{almost everywhere},\]
and distinguish two cases according to whether \(\mu\) is polar-carried.

If $\mu$ is polar-carried, a rigidity argument based on the removability of polar singularities directly forces $\beta=\mu$. If $\mu$ is not polar-carried, the capacity dichotomy provides a compactly supported positive-mass component of $\mu$ whose logarithmic singularities are uniformly controlled. Were $\Gamma$ nonzero, Cauchy's estimates would force a consecutive block of normalized derivative ratios to be exponentially small on a set of positive planar Lebesgue measure. The postselection estimate of Section~3 shows that the expected area of such a set decays exponentially. This contradiction gives $\Gamma=0$, and hence again $\beta=\mu$.

Therefore every subsequential limit of the derivative-zero measures is equal to $\mu$. Convergence of their total masses then upgrades vague convergence to weak convergence.

Section~\ref{sec:prelim} develops the deterministic and potential-theoretic tools. Section~\ref{sec:non-polar} treats the non-polar-carried case using postselection and the derivative-block argument. Section~\ref{sec:polar} proves the rigidity result for the polar-carried case, and Section~\ref{sec:main-thm} assembles the proof of the main theorem.

Much of the potential-theoretic tools used below are taken from Ransford \cite{Ran95}.
\section{Potential theoretic and polynomial preliminaries}\label{sec:prelim}
For $x>0$, write 
\[\logm x:=\max\{0,-\log x\},\qquad \logp x:=\max\{0,\log x\}.\]

We use the convention that a function
$u:\Omega\to[-\infty,\infty)$ on an open set $\Omega\subset\C$ is
\emph{subharmonic} if it is upper semicontinuous and satisfies the mean value
inequality
\[u(z)\le
    \frac{1}{\pi r^2}\int_{B(z,r)}u(w)\,d^2w\]
whenever $\overline{B(z,r)}\subset\Omega$. We allow $u$ to take the value $-\infty$;
the case $u\equiv-\infty$ will be excluded explicitly when necessary.

\subsection{A positive-part estimate for the derivative ratio}
For $z\notin\{X_1,\ldots, X_n\}$, we have 
\[R_{n,k}(z):=\frac{P_n^{(k)}(z)}{(n)_kP_n(z)}=\binom{n}{k}^{-1}e_k\left(\frac{1}{z-X_1},\ldots, \frac{1}{z-X_n}\right),\]
where $(n)_k=n(n-1)\cdots (n-k+1)$ and $e_k$ is the elementary symmetric polynomial. We use the convention that $R_{n,0}\equiv 1$.

The positive part of the logarithm is controlled deterministically with no assumption on the roots.
\begin{lemma}\label{lem:positive}
For every compact $K\subset\C$, there is a constant $C_K<\infty$ such that, for every choice of $X_1,\ldots,X_n\in\C$ and every $0\leq k<n$,
\[
 \frac{1}{n}\int_K\logp|R_{n,k}(z)|\,\dd^2z
 \leq C_K\frac{k}{n}.
\]
\end{lemma}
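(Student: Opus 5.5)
We bound $|R_{n,k}|$ pointwise by a product over roots and then integrate the logarithm. Write $w_j=1/(z-X_j)$. Since $e_k$ is a sum of $\binom nk$ monomials,
\[
|R_{n,k}(z)|\le \binom{n}{k}^{-1}e_k(|w_1|,\ldots,|w_n|)\le \max_{|S|=k}\prod_{j\in S}|w_j|,
\]
so $|R_{n,k}|$ is at most the product of the $k$ largest values of $|w_j|$. A cruder bound is more convenient: the product of the $k$ largest $|w_j|$ is at most $\prod_{j\in S}\max(1,|w_j|)$ for the maximizing $S$, and hence at most $\prod_{j\in T}\max(1,|w_j|)$ for any $T\supseteq S$. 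Taking $\logp$, which is subadditive and monotone, we get
\[
\logp|R_{n,k}(z)|\le \sum_{j\in S(z)}\logp\frac{1}{|z-X_j|},
\]
where $S(z)$ is the set of $k$ indices with the largest $|w_j|$, that is, the $k$ roots nearest to $z$.

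Integrating over $K$, we need a bound on $\int_K \sum_{j\in S(z)}\logm|z-X_j|\,d^2z$ that is uniform in the root configuration and of order $k$. Write it as $\int_K F_k(z)\,d^2z$, where $F_k(z)$ is the sum of the $k$ largest values among $\logm|z-X_j|$. The naive bound $\sum_{j=1}^n\int_K\logm|z-X_j|\,d^2z\le nC$ loses the factor $k/n$, so we must exploit the restriction to the $k$ nearest roots. Here we use the rearrangement structure: $F_k(z)=\max_{|S|=k}\sum_{j\in S}\logm|z-X_j|$. Partition $K$ according to which $k$-set is selected. For each $j$, let $A_j=\{z\in K: j\in S(z)\}$. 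Then
\[
\int_K F_k \le \sum_j \int_{A_j}\logm|z-X_j|\,d^2z,\qquad \sum_j |A_j|=k|K|.
\]
For a measurable set $A$ of area $a$, the integral $\int_A\logm|z-x|\,d^2z$ is maximized when $A$ is the disc $B(x,\sqrt{a/\pi})$. This gives a concave function $g(a)\lesssim a(1+\logp(1/a))$.

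By concavity (Jensen) over $n$ pieces with total area $k|K|$,
\[
\sum_j g(|A_j|)\le n\,g\!\left(\frac{k|K|}{n}\right)\lesssim k|K|\left(1+\log\frac{n}{k|K|}\right).
\]
This bound has an extra $\log(n/k)$ and is therefore not yet $O(k)$. This is the main obstacle, and the crude bound must be sharpened. The fix is to use $\logp$ of the normalized symmetric mean itself rather than the max monomial. Use Maclaurin's inequality:
\[
\left(\binom nk^{-1}e_k(|w|)\right)^{1/k}\le \frac1n\sum_j|w_j|.
\]
This yields
\[
\logp|R_{n,k}|\le k\,\logp\Big(\frac1n\sum_j\frac{1}{|z-X_j|}\Big).
\]
We then need $\int_K\logp\big(\frac1n\sum_j|z-X_j|^{-1}\big)d^2z\le C_K$ uniformly in the roots. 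This follows from Jensen on normalized area, since $\logp t\le \log(1+t)$ is concave:
\[
\frac{1}{|K|}\int_K\log\Big(1+\frac1n\sum_j\frac1{|z-X_j|}\Big)d^2z\le \log\Big(1+\frac1n\sum_j\frac1{|K|}\int_K\frac{d^2z}{|z-X_j|}\Big).
\]
The right-hand side is bounded because $\int_K|z-x|^{-1}d^2z\le 2\sqrt{\pi|K|}$ uniformly in $x$ (the worst case is the disc centred at $x$). Hence
\[
\frac1n\int_K\logp|R_{n,k}|\le \frac kn\,|K|\log\!\big(1+2\sqrt{\pi/|K|}\big)=:C_K\frac kn,
\]
and the case $k=0$ is trivial. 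I expect the Maclaurin step to be the crux; the rest is routine.
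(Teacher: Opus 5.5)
Your final argument is correct and is essentially the paper's proof. Maclaurin's inequality (after $|e_k(w)|\le e_k(|w|)$) reduces everything to a bound on $\int_K |z-x|^{-1}\,d^2z$ that is uniform in $x$. The only difference is the last step: the paper uses the cruder $\logp u\le u$, giving $\logp|R_{n,k}|\le \frac{k}{n}\sum_j|z-X_j|^{-1}$, whereas you apply Jensen to the concave $\log(1+t)$. Your route yields an explicit constant, valid when $|K|>0$; the case $|K|=0$ is trivial. The opening max-monomial/rearrangement detour is unnecessary and, as you noticed, genuinely loses a factor $\log(n/k)$, so you can simply drop it.
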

\begin{proof}
    $k=0$ is immediate. For $k\geq 1$, Maclaurin's inequality gives
    \[|R_{n,k}(z)|\le\left(\frac{1}{n}\sum_{j=1}^n\frac{1}{|z-X_j|}\right)^k\] for Lebesgue-a.e. $z$. Since $\logp u\le u$ for $u\ge 0$,
    \[\logp|R_{n,k}(z)|\le \frac{k}{n}\sum_{j=1}^n\frac{1}{|z-X_j|}.\]
    We claim that $\sup_{w\in \C}\int_K\frac{1}{|z-w|}\,d^2z<\infty.$ Indeed, assume $K\subset B(0,R)$ and $|w|\le 2R$, then $K-w\subset B(0,3R)$; if $|w|>2R$, then $|z-w|\ge |w|-R>R$ on $K$ by the triangle inequality.

    Therefore,
    \[\frac{1}{n}\int_K\log^+|R_{n,k}(z)|\,d^2z
    \le \frac{k}{n^2}\sum_{j=1}^n\int_K\frac{d^2z}{|z-X_j|}
    \le C_K\frac{k}{n}.\]
\end{proof}

\subsection{Polar-carried measures and the capacity dichotomy}
We follow the terminology and conventions of \cite{Ran95}.
\begin{definition}[Polar carriers]
For a compactly supported probability measure $\nu$, define
\[I(\nu):=\iint_{\C\times\C}\log\frac{1}{|z-w|}\,d\nu(z)\,d\nu(w).\]
For a nonempty compact set $K\subset\C$, let $\mathcal M_1(K)$ denote the set of Borel probability measures supported on $K$, and define the logarithmic capacity of $K$ by
\[\operatorname{cap}(K):=\exp\left\{-\inf_{\nu\in\mathcal M_1(K)}I(\nu)\right\}.\]
For a Borel set $E\subset\C$, define
\[\operatorname{cap}(E) :=
 \sup\left\{\operatorname{cap}(K):
 K\subset E,\ K\text{ compact}\right\}.\]
The set $E$ is \emph{logarithmically polar} if $\operatorname{cap}(E)=0$. A probability measure $\mu$ is \emph{polar-carried} if $\mu(E)=1$ for some logarithmically polar Borel set $E$.
\end{definition}

For a compactly supported probability measure $\lambda$, put
\[L_\lambda:=\sup_{w\in\C}\int_\C\logm|x-w|\,d\lambda(x).\]

\begin{proposition}\label{prop:capacity-equivalence}
    A Borel probability measure $\mu$ is not polar-carried if and only if there exist $a\in(0,1]$ and a compactly supported probability measure $\lambda$ such that $a\lambda\le \mu$ and $L_\lambda<\infty$.
\end{proposition}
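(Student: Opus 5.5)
We prove the two implications separately, working throughout with the definitions of capacity and polarity recalled above from \cite{Ran95}.

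\emph{($\Leftarrow$)} Suppose $a\lambda\le\mu$ with $a>0$, $\lambda$ a compactly supported probability measure, and $L_\lambda<\infty$. Let $E$ be any Borel set with $\mu(E)=1$; we show that $E$ is not polar.

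First, $\mu(\C\setminus E)=0$ forces $\lambda(\C\setminus E)=0$, so $\lambda(E)=1$. By inner regularity of the finite Borel measure $\lambda$, there is a compact $K\subset E\cap\operatorname{supp}\lambda$ with $\lambda(K)\ge 1/2$. Put $\nu:=\lambda|_K/\lambda(K)\in\mathcal M_1(K)$.

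Next we bound the energy of $\nu$ from above. Since $K$ is bounded, $\log^+|z-w|$ is bounded on $K\times K$, so
\[I(\nu)\le \iint\logm|z-w|\,d\nu\,d\nu\le \frac{1}{\lambda(K)}\sup_w\int\logm|x-w|\,d\lambda(x)\le 2L_\lambda<\infty.\]
Hence $\operatorname{cap}(K)\ge e^{-I(\nu)}>0$, and therefore $\operatorname{cap}(E)>0$. Since $E$ was an arbitrary carrier of $\mu$, the measure $\mu$ is not polar-carried.

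\emph{($\Rightarrow$)} This direction is the main step. Suppose $\mu$ is not polar-carried. By \cite{Ran95}, Borel polar sets are exactly the Borel sets $E$ with $\nu(E)=0$ for every finite-energy compactly supported measure $\nu$. I would argue by contradiction and show that $\mu$ charges some compact set of positive capacity.

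If every compact $K$ of positive capacity had $\mu(K)=0$, take compact sets $K_j$ increasing with $\mu(K_j)\to1$ (inner regularity). Each $K_j$ has a "nonpolar part" of $\mu$-measure zero. The cleanest route is a decomposition: let $s:=\sup\{\mu(F): F \text{ Borel polar}\}$. Countable unions of polar sets are polar, so the supremum is attained by some polar $F_0$. Since $\mu$ is not polar-carried, $s<1$. Then $\mu':=\mu|_{\C\setminus F_0}$ is nonzero and charges no polar set.

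Now choose a compact $K\subset\C\setminus F_0$ with $\mu'(K)>0$. This $K$ is not polar, because a polar $K$ would have $\mu'(K)=0$. The restriction $\mu'|_K$ charges no polar set, but it need not have finite energy, so I truncate its potential. The potential $p(w):=\int\logm|x-w|\,d\mu'|_K(x)$ is lower semicontinuous and is $+\infty$ only on a set where, by the above, things are controlled. Specifically, I would show that the set $\{p=\infty\}$ is polar; it is the $-\infty$ set of the subharmonic function $-p$, which is not identically $-\infty$, since $-p$ averaged against Lebesgue measure is finite. Consequently $\mu'(\{p=\infty\})=0$. Choose $M$ large with $\mu'(K\cap\{p\le M\})>0$, and by inner regularity (using that $\{p\le M\}$ is closed) pick a compact $K'\subset K\cap\{p\le M\}$ of positive mass.

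Set $\lambda:=\mu'|_{K'}/\mu'(K')$ and $a:=\mu'(K')$. Then $a\lambda\le\mu$. For $w\in K'$, the potential of $\lambda$ at $w$ is at most $M/a$. To pass from $w\in K'$ to all $w\in\C$, I would use a maximum principle for potentials: the bound on the support should propagate to all of $\C$, with the constant $M/a$ replaced by a universal multiple, giving $L_\lambda\lesssim M/a+O(1)<\infty$. This Maria–Frostman type step is the one I expect to need the most care. If that principle is unavailable in the required form, a fallback is a Lusin/Egorov-type choice of $K'$ on which $p$ is continuous, followed by the continuity principle for potentials.
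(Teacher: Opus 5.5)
Your ($\Leftarrow$) direction is the paper's argument. Your ($\Rightarrow$) outline is also correct, but it takes a heavier route at the end and contains one small slip in the middle.

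\textbf{The slip.} With $\nu=\mu'|_K$, the function $-p(w)=\int\min\{0,\log|x-w|\}\,d\nu(x)$ is not subharmonic. The integrand $\min\{0,\log|x-w|\}$ is a minimum of harmonic functions and fails the sub-mean value property on the circle $|w-x|=1$. The correct justification is the paper's: $p_\nu(w)=\int\log|w-x|\,d\nu(x)=\int\log^+|w-x|\,d\nu(x)-p(w)$, and the first term is finite everywhere. Hence $\{p=\infty\}=\{p_\nu=-\infty\}$, which is polar by Lemma~\ref{lem:polar-facts}(1)--(2).

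\textbf{Different choices.} Splitting off a maximal polar part $F_0$ is valid but more than you need. The paper simply restricts to a ball $\overline{B}(0,R)$ on which $\mu$ is not polar-carried. It then uses only that $\eta$ cannot be carried by the polar set $\{U_\eta=\infty\}$.

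The step you flagged as delicate needs no Maria--Frostman theory. For $w\in\C$, pick $x\in K'$ nearest to $w$. Then $|x-y|\le|x-w|+|w-y|\le 2|w-y|$ for all $y\in K'$. So $\logm|w-y|\le\log 2+\logm|x-y|$, and therefore $\int\logm|w-y|\,d\lambda(y)\le\log 2+M/a$. That single comparison is the paper's entire argument. It is also the usual first step in proving the maximum principle, which explains the extra constant you anticipated.

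Your appeal to Maria--Frostman does work if you apply it to the full kernel:
\begin{itemize}
\item On $\operatorname{supp}\lambda$ you have $\int\log\frac{1}{|w-y|}\,d\lambda(y)\le M/a$, and the principle propagates this bound to all of $\C$.
\item Add back $\int\log^+|w-y|\,d\lambda(y)\le\log(1+\operatorname{diam}K')$ when $\operatorname{dist}(w,K')<1$; the truncated potential vanishes otherwise.
\item Together these give $L_\lambda<\infty$.
\end{itemize}
The Lusin/continuity-principle fallback is unnecessary. So your route is sound, but it imports a deeper theorem to obtain a bound that the nearest-point comparison gives directly and elementarily.
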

To prove Proposition~\ref{prop:capacity-equivalence}, we need the following lemma.
\begin{lemma}\cite{Ran95}\label{lem:polar-facts}
    For Borel subsets of $\C$, capacity-zero polarity agrees with the usual subharmonic notion of polarity. Under this identification, the following statements hold. 
    \begin{enumerate}
        \item [(1)][Theorem 3.1.2] If $\eta$ is a compactly supported finite positive measure on $\C$, then 
        \[p_\eta(z):=\int_\C \log|z-w|\,d\eta(w)\]
        is subharmonic on $\C$ and is not identically $-\infty$. 
        \item [(2)][Theorem 3.5.1] If $u$ is subharmonic on a domain and $u\not\equiv -\infty$, then $\{u=-\infty\}$ is polar.
        \item [(3)][Corollary 3.2.4] Every Borel polar set has planar Lebesgue measure zero.
        \item [(4)][Corollary 3.2.5] A countable union of Borel polar sets is polar.
    \end{enumerate}
\end{lemma}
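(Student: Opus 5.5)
Lemma~\ref{lem:polar-facts} collects standard facts from \cite{Ran95}, so the plan is to reduce each item to the cited result. Where it is cheap, I would also record why it holds, so the capacity-based definition used here visibly matches Ransford's.

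First, the identification. Ransford calls a set $E$ polar if $I(\nu)=-\infty$ for every nonzero finite Borel measure $\nu$ with compact support in $E$. Here $E$ is polar when $\operatorname{cap}(E)=0$, that is, when $\operatorname{cap}(K)=0$ for every compact $K\subset E$. By inner regularity, any nonzero compactly supported finite measure on a Borel set gives positive mass to some compact subset, and restricting and normalizing a measure to that compact set keeps its energy finite if it was finite. So the two notions agree for compact sets, and hence for Borel sets, by the sup-over-compacts definition.

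With that in place, the items go as follows.
\begin{enumerate}
\item[(1)] Upper semicontinuity of $p_\eta$ follows from the decreasing truncations $\int\log\max\{|z-w|,1/m\}\,d\eta(w)$, which are continuous, together with monotone convergence. The mean-value inequality follows from Fubini and the subharmonicity of $z\mapsto\log|z-w|$. Finally, $p_\eta\not\equiv-\infty$ because $p_\eta(z)\ge\eta(\C)\log(|z|-R)>-\infty$ when $\operatorname{supp}\eta\subset B(0,R)$ and $|z|>R+1$.
\item[(3)] If a Borel set had positive area, it would contain a compact $K$ of positive area. The normalized area measure on $K$ has finite energy, because $\int_K\log^-|z-w|\,d^2w$ is bounded uniformly in $z$, by the same computation as in Lemma~\ref{lem:positive}. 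Hence $\operatorname{cap}(K)>0$.
\item[(4)] Let $\nu$ be a measure of finite energy supported on a compact subset of $\bigcup_j E_j$. Then $\nu(E_j)>0$ for some $j$. Localizing to a small piece of $E_j$ of diameter less than $1$, where the kernel $\log\frac1{|z-w|}$ is nonnegative, the energy of the restriction is at most that of $\nu$ (up to a bounded correction coming from the diameter), hence finite. This contradicts the polarity of $E_j$.
\item[(2)] This is the step I expect to be hardest. Suppose $\{u=-\infty\}$ contained a compact $K$ carrying a measure $\nu$ with $I(\nu)<\infty$. The Riesz decomposition on a neighbourhood writes $u=p_{\Delta u/2\pi}+h$ with $h$ harmonic. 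Then $\int u\,d\nu=-\infty$, while $\int p_{\sigma}\,d\nu=\int p_\nu\,d\sigma$ by Fubini. Since $p_\nu$ is bounded below on compacts (finite energy forces this after a standard truncation of $\nu$), this integral is finite, which is a contradiction. The technical care lies in the localization and in choosing $\nu$ with bounded potential.
\end{enumerate}
Since the paper only uses these facts, in the write-up I would simply cite Ransford's Theorems 3.1.2 and 3.5.1 and Corollaries 3.2.4 and 3.2.5, after noting the identification of polar sets above.
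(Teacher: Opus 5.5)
Your proposal is correct and takes the same route as the paper, which also gives no proof beyond citing Ransford after asserting that the capacity-zero and energy definitions of polarity coincide. Your supplementary sketches of (1)--(4) are the standard arguments, and the truncation you invoke in (2) to get a measure with potential bounded below is exactly the nearest-point construction the paper itself carries out in the proof of Proposition~\ref{prop:capacity-equivalence}. Two cosmetic fixes: Ransford's energy $\iint\log|z-w|$ is the negative of the paper's $I$, so keep one sign convention throughout, and in (4) take the localized piece of $E_j$ to be compact (by inner regularity of $\nu$), so that the restricted measure has compact support inside $E_j$.
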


\begin{remark}
    Every law with countable support is polar-carried, whereas a law with a nonzero component absolutely continuous with respect to planar Lebesgue measure is not. Singular continuous laws can occur on either side of this dichotomy. More information about polar sets can be found in \cite[Sections 3.2 and 5.1]{Ran95}.
\end{remark}

\begin{proof}[Proof of Proposition~\ref{prop:capacity-equivalence}]

For a compactly supported finite measure $\eta$, put
\[U_\eta(x):=
\int_{\C}\log^+\frac{1}{|x-y|}\,d\eta(y),\qquad S_\eta:=\{x\in\C:U_\eta(x)=\infty\}.\]
By Lemma~\ref{lem:polar-facts}(1), $p_\eta$ is subharmonic and is not identically $-\infty$. Since $\eta$ has compact support, 
$p_\eta(x)=\int_{\C}\log^+|x-y|\,d\eta(y)-U_\eta(x)$ with the first term finite for every $x$. Hence $S_\eta=\{x:p_\eta(x)=-\infty\}$ which is polar by Lemma~\ref{lem:polar-facts}(2).

Suppose $\mu$ is not polar-carried. For some integer $R\ge 1$, the finite measure $\eta:=\mu|_{\overline{B}(0,R)}$ is not carried by a polar set. Otherwise a countable union of polar carriers for these restrictions would carry $\mu$, contrary to Lemma~\ref{lem:polar-facts}(4). Since $S_\eta$ is polar, $\eta(\C\setminus S_\eta)>0$. Thus the Borel set $\{x:U_\eta\le M\}$ has positive $\eta$-mass for some $M<\infty$. By inner regularity, choose a compact $L\subset \{x:U_\eta\le M\}$ with $\eta(L)>0$ and set $\lambda_0:=\eta|_L$.

For $w\in \C$, choose $x\in L$ minimizing $|w-x|$. Then $|x-y|\le 2|w-y|$ for every $y\in L$. So 
\[\sup_{w\in \C}\int_L\logp\frac{1}{|w-y|}\,d\lambda_0(y)\le \sup_{w\in \C}\int_L\left(\log 2+\logp\frac{1}{|x-y|}\right)\,d\lambda_0(y)\le \lambda_0(\C)\log 2+ M<\infty.\] With $a=\lambda_0(\C)$ and $\lambda=\lambda_0/\lambda_0(\C)$ we have $a\lambda\le \mu$ and $L_\lambda<\infty$.

Conversely, suppose $a\lambda\le\mu$ and $L_\lambda<\infty$. By Tonelli's theorem and the definition of $L_\lambda$,

\[\iint_{\C^2}\log^+\frac1{|x-y|}\,d\lambda(x)d\lambda(y)
=\int_{\C}\left(\int_{\C}\log^-|x-y|\,d\lambda(x)\right)d\lambda(y)\\
\le L_\lambda<\infty.\]
On the other hand, compact support gives the negative part bounded. Hence
\[I(\lambda)=\iint_{\C^2}\log\frac1{|x-y|}\,d\lambda(x)d\lambda(y)<\infty.\]

Suppose by contradiction that $\mu$ is carried by a polar Borel set $E$. Since $a\lambda\le\mu$, $a\lambda(\C\setminus E)\le\mu(\C\setminus E)=0$ and hence $\lambda(E)=1$. Then there is a compact set $K\subset E$ such that $\lambda(K)>0$. Put $\nu:=\frac{\lambda|_K}{\lambda(K)}$. Then $I(\nu)<\infty$. Since $\nu$ is a probability measure supported on $K$, the definition of logarithmic capacity gives
\[\operatorname{cap}(K)
=\exp\left\{-\inf_{\rho\in\mathcal M_1(K)}I(\rho)\right\}
\ge e^{-I(\nu)}>0.\]
But $K\subset E$ and $E$ is polar, so $\operatorname{cap}(K)\le\operatorname{cap}(E)=0$, a contradiction. Thus $\mu$ is not polar-carried.

\end{proof}

\subsection{Compactness of logarithmic potentials}
We first introduce some notions for normalization needed when a vanishing proportion of roots is allowed to escape arbitrarily far. Let 
\[\mathcal{P}_n:=\{p\in \C[z]:\deg p\le n\}.\]
For $p(z)=\sum_{j=0}^n a_jz^j$, $q(z)=\sum_{j=0}^n b_j z^j$, we define the inner product and the induced norm
\[\langle p,q\rangle_{\mathrm{c},n}=\sum_{j=0}^n a_j\overline{b_j},\qquad
\|p\|_{\mathrm{c},n}^2=\langle p,p\rangle_{\mathrm{c},n}=\sum_{j=0}^n |a_j|^2.\]
Thus $1,z,\ldots, z^n$ form an orthonormal basis of $\mathcal{P}_n$. 
Also we have the two estimates
\begin{equation}\label{eq:estimate}
    |p(z)|\le \sqrt{n+1}\max\{1,|z|\}^n\|p\|_{\mathrm{c},n},\qquad p\in\mathcal{P}_n,\qquad \frac{1}{2\pi}\int_0^{2\pi}|p(e^{it})|^2\,dt=\|p\|_{\mathrm{c},n}^2.
\end{equation} They are given by Cauchy-Schwarz and Parseval's identity respectively.

Let $\Delta=\partial_x^2+\partial_y^2$ be the usual planar Laplacian. We use the normalized Laplacian
\[\widetilde{\Delta}:=\frac{1}{2\pi}\Delta,\]
so that $\widetilde{\Delta}\log|z-a|=\delta_a$.

For a nonzero polynomial $p$, write $[Z(p)]$ for its finite zero-counting measure, including multiplicity. 

Recall that vague convergence on $\C$ means convergence against all continuous compactly supported complex functions. A vague limit of positive measures of mass at most one has mass at most one. If finite positive measures converge vaguely while their total masses converge to the mass of the limit, then the convergence is weak.

We will need the following proposition in the later proofs. 

\begin{proposition}\cite{Dem12}\label{prop:subharmonic-tools}
    Let $\Omega\subset \C$ be open.
    \begin{enumerate}
        \item[(1)][Theorem 4.20] If a real distribution $T$ on $\Omega$ has $\widetilde{\Delta}T\ge 0$, then $T$ has a unique locally integrable subharmonic representative.
        \item[(2)][Proposition 4.21] If $(u_j)$ is a sequence of subharmonic functions on $\Omega$ such that 
        \[\sup_j \int_K|u_j(z)|\, d^2z<\infty\qquad\text{for every compact set }K\subset \Omega,\]
        then $(u_j)$ is relatively compact in $L_{\rm loc}^1(\Omega)$, and every $L_{\rm loc}^1$ limit has a subharmonic representative.
    \end{enumerate}
\end{proposition}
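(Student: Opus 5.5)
Both items are the classical facts from \cite{Dem12}. I would prove (1) by regularization, using the monotonicity of mean values of subharmonic functions, and (2) by a local Riesz decomposition combined with a compact Sobolev embedding. Part (2) then feeds back into (1).

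\emph{Part (1).} Fix a radial mollifier $\rho\ge 0$ with $\int\rho=1$, and set $T_\varepsilon:=T*\rho_\varepsilon$ on $\Omega_\varepsilon:=\{z:\operatorname{dist}(z,\partial\Omega)>\varepsilon\}$. Each $T_\varepsilon$ is smooth and satisfies $\widetilde\Delta T_\varepsilon=(\widetilde\Delta T)*\rho_\varepsilon\ge0$, so it is a smooth subharmonic function.

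For smooth subharmonic $v$, the circle means $r\mapsto\frac1{2\pi}\int v(z+re^{it})\,dt$ are nondecreasing in $r$. This gives, for a radial kernel, $T_{\varepsilon}*\rho_\delta\ge T_\varepsilon$. Since $T_\varepsilon*\rho_\delta=T_\delta*\rho_\varepsilon$ and the right side tends to $T_\delta$ as $\varepsilon\to0$, we obtain that $\varepsilon\mapsto T_\varepsilon$ is nondecreasing. Hence $u:=\lim_{\varepsilon\downarrow0}T_\varepsilon$ exists pointwise as a decreasing limit.

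\begin{itemize}
\item $u$ is upper semicontinuous, being an infimum of continuous functions.
\item $u$ satisfies the sub-mean value inequality, by monotone convergence.
\item $u$ is not $\equiv-\infty$ on any component, because $T_\varepsilon\to T$ in $\mathcal D'$.
\end{itemize}

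Monotone convergence, with $T_{\varepsilon_0}$ as an integrable upper bound, then gives $T_\varepsilon\to u$ in $L^1_{\rm loc}$, so $u$ represents $T$.

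Uniqueness: if two subharmonic functions agree a.e., then at every point each equals the limit of its area averages over $B(z,r)$ as $r\to0$. This uses the sub-mean inequality together with upper semicontinuity, and the averages coincide, so the two functions are equal everywhere.

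\emph{Part (2).} Fix discs $D'\Subset D\Subset\Omega$ and a cutoff $\chi\in C_c^\infty(\Omega)$ with $\chi=1$ on $D$. The Riesz measures $\nu_j:=\widetilde\Delta u_j\ge0$ satisfy
\[\nu_j(D)\le\int\chi\,d\nu_j=\int u_j\,\widetilde\Delta\chi\le C\int_{\operatorname{supp}\chi}|u_j|,\]
which is bounded uniformly in $j$.

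On $D$, write $u_j=p_j+h_j$, where $p_j:=\int_D\log|z-w|\,d\nu_j(w)$ and $h_j$ is harmonic.
\begin{itemize}
\item The kernel $\nabla\log|z|$ lies in $L^q_{\rm loc}$ for $q<2$, so $\{p_j\}$ is bounded in $W^{1,q}(D)$. By Rellich's theorem it is relatively compact in $L^1(D)$.
\item The harmonic parts $h_j=u_j-p_j$ are then bounded in $L^1(D)$. By the mean value property they are bounded in $C^k(\overline{D'})$ for every $k$, so Arzel\`a--Ascoli gives relative compactness.
\end{itemize}
Covering $\Omega$ by countably many such discs and taking a diagonal subsequence yields relative compactness in $L^1_{\rm loc}(\Omega)$.

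Any $L^1_{\rm loc}$ limit $T$ satisfies $\widetilde\Delta T=\lim\widetilde\Delta u_j\ge0$ in $\mathcal D'$, since positivity is preserved under distributional limits. Part (1) then supplies the subharmonic representative.

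\emph{Main obstacle.} The delicate step is the compactness in (2), specifically converting the $L^1$ bound into uniform local mass bounds on the Riesz measures and then into $W^{1,q}$ control of the potentials. The remaining steps are routine.
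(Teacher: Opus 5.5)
The paper gives no proof of this proposition: both parts are imported from \cite{Dem12} and used as black boxes. Your proposal is a correct, self-contained reconstruction along the standard lines. For (1) you regularize and take a monotone limit; for (2) you use a local Riesz decomposition $u_j=p_j+h_j$. Compared with the citation, your version makes explicit which input drives the compactness. It is the uniform local $L^1$ bound, converted into a uniform bound on the Riesz masses $\nu_j(D)\le C\int_{\operatorname{supp}\chi}|u_j|$. Your $W^{1,q}$ bound ($q<2$) plus Rellich for the potentials is a valid alternative to the other common device. That device is weak-$*$ compactness of the Riesz measures combined with continuity of $\nu\mapsto\int\log|\cdot-w|\,d\nu(w)$ into $L^1_{\rm loc}$.

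One step is misstated. In (1), the chain $T_\varepsilon\le T_\varepsilon*\rho_\delta=T_\delta*\rho_\varepsilon\to T_\delta$ only gives $\limsup_{\varepsilon\to0}T_\varepsilon\le T_\delta$. It does not show that $\varepsilon\mapsto T_\varepsilon$ is nondecreasing, and your later use of monotone convergence needs that. The fix uses something you already stated. Circle means of a smooth subharmonic $v$ increase with the radius, so $r\mapsto v*\rho_r$ is nondecreasing for radial $\rho$. Applying this to $v=T_\eta$ gives $T_{\varepsilon_1}*\rho_\eta\le T_{\varepsilon_2}*\rho_\eta$ for $\varepsilon_1<\varepsilon_2$, and letting $\eta\to0$ yields $T_{\varepsilon_1}\le T_{\varepsilon_2}$.

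Two implicit points in (2) also deserve a line each. First, you use the converse of (1): a locally integrable subharmonic $u_j$ satisfies $\widetilde\Delta u_j\ge0$ in $\mathcal{D}'$. This follows by testing the sub-mean value inequality against $\phi\ge0$ and letting the radius shrink. Second, you need $\chi\ge0$ to get $\nu_j(D)\le\int\chi\,d\nu_j$.
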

\begin{remark}
    Since $\widetilde{\Delta}$ is a positive multiple of the standard Laplacian, the positivity conventions agree.
\end{remark}

\begin{lemma}\label{lem:polynomial-compactness}
    Let $p_n\in\mathcal P_n\setminus\{0\}$ and $\|p_n\|_{\mathrm c,n}=1$.  Then $v_n(z):=\frac1n\log|p_n(z)|$ is relatively compact in $L^1_{\mathrm{loc}}(\C)$.  Every limit is finite almost everywhere. If along the same subsequence, $\frac{1}{n}[Z(p_n)] \to \sigma$ vaguely on $\C$, then every $L^1_{\mathrm{loc}}$ limit $v$ satisfies 
    \[\widetilde{\Delta}v=\sigma.\]
\end{lemma}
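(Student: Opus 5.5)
The plan is to apply Proposition~\ref{prop:subharmonic-tools}(2) to the subharmonic functions $v_n=\frac1n\log|p_n|$. This requires uniform $L^1$ bounds on every compact set, and I would prove those separately for the positive and negative parts. The upper bound is immediate from \eqref{eq:estimate}:
\[v_n(z)\le \frac{\log(n+1)}{2n}+\logp|z|.\]
So $v_n^+$ is uniformly bounded on compacts.

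For the negative part I would use a lower bound coming from the normalization. By Parseval and Jensen's inequality on the unit circle,
\[\frac{1}{2\pi}\int_0^{2\pi}\log|p_n(e^{it})|\,dt\le \frac12\log\frac{1}{2\pi}\int_0^{2\pi}|p_n(e^{it})|^2\,dt=0,\]
which is an upper bound and the wrong direction. Instead I would use the coefficient bound. Since $\|p_n\|_{\mathrm c,n}=1$, some coefficient satisfies $|a_j|\ge (n+1)^{-1/2}$. Cauchy's estimate on the circle of radius $r$ gives
\[\max_{|z|=r}|p_n|\ge r^j|a_j|\ge \min(1,r)^n(n+1)^{-1/2}.\]
Hence $\sup_{B(0,2)}v_n\ge -o(1)$, while also $\sup_{B(0,R)}v_n\le \logp R+o(1)$.

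Next I would invoke the standard fact, used in the proof of Proposition~4.21 of \cite{Dem12}, that a family of subharmonic functions which is locally uniformly bounded above and does not converge uniformly to $-\infty$ on compacts is bounded in $L^1_{\mathrm{loc}}$. One could also argue directly. Write $v_n=\frac1n\log|c_n|+\frac1n\sum_j\log|z-\zeta_j|$. Zeros far out contribute terms of size about $\frac1n\log|\zeta_j|$, which I would absorb into the constant, and $\int_K\logm|z-\zeta|\,d^2z$ is uniformly bounded as in Lemma~\ref{lem:positive}. This yields the $L^1_{\mathrm{loc}}$ bound and, by Proposition~\ref{prop:subharmonic-tools}(2), relative compactness together with subharmonic limits.

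For finiteness a.e., note that a limit $v$ has a subharmonic representative. It is not identically $-\infty$, because $L^1_{\mathrm{loc}}$ convergence forces $\int_{B(0,2)}v$ to be finite. Its $-\infty$ set is then polar by Lemma~\ref{lem:polar-facts}(2), hence Lebesgue null by Lemma~\ref{lem:polar-facts}(3). For the Laplacian, $\widetilde\Delta v_n=\frac1n[Z(p_n)]$ in the sense of distributions. $L^1_{\mathrm{loc}}$ convergence gives distributional convergence of $\widetilde\Delta v_n$ to $\widetilde\Delta v$. Vague convergence $\frac1n[Z(p_n)]\to\sigma$ gives convergence against test functions, so $\widetilde\Delta v=\sigma$.

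The main obstacle is the lower ($L^1$) control. Zeros may escape to infinity, and the leading coefficient may be tiny, so the normalization must be used carefully. I would rely on the coefficient-based lower bound for $\sup_{B(0,2)}v_n$ combined with the upper bound and the Harnack-type compactness principle for subharmonic functions.
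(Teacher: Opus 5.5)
Your proposal is correct and is essentially the paper's argument: a uniform upper bound from \eqref{eq:estimate}, and a point in a fixed compact where $v_n$ is not very negative. Then the submean inequality on a large disk centred at that point gives $L^1_{\mathrm{loc}}$ bounds, followed by Proposition~\ref{prop:subharmonic-tools}(2) and the distributional Laplacian. Parseval does give that point directly ($\max_{|z|=1}|p_n|\ge 1$ because the $L^2$ mean of $|p_n|$ on the circle is $1$), so your coefficient-plus-Cauchy detour is unnecessary but harmless. The ``standard fact'' you cite is exactly the paper's submean step, and it must be stated with the uniform hypothesis $\inf_n\sup_{\overline{B(0,2)}}v_n>-\infty$, which you do have; mere failure of $v_n\to-\infty$ locally uniformly only yields an $L^1_{\mathrm{loc}}$-convergent subsequence, not boundedness of the whole sequence. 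Finiteness a.e.\ of the limit is immediate from local integrability, without the polar-set argument.
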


\begin{proof}
The Cauchy-Schwarz coefficient estimate in \eqref{eq:estimate} gives
\[v_n(z)\le \frac{\log(n+1)}{2n}+\log^+|z|,\]
so $(v_n)$ is locally uniformly bounded above. Parseval's identity in \eqref{eq:estimate} gives a point $\zeta_n$ on the unit circle such that $|p_n(\zeta_n)|\ge 1$ and hence $v_n(\zeta_n)\ge 0$.

Fix a compact $K\subset\C$ and choose $R>1+\sup_{z\in K}|z|$. Then, for every $n$, $K\subset B(\zeta_n,R)\subset B(0,R+1)$. Let $C_R\ge 0$ be a common upper bound for $v_n$ on $B(0,R+1)$. The submean inequality gives
\[0\le \pi R^2v_n(\zeta_n)
      \le \int_{B(\zeta_n,R)}v_n\,d^2z.\]
Thus, on $B(\zeta_n,R)$, the integral of the negative part of $v_n$ is at most that of its positive part, and the latter is at most $\pi R^2C_R$. Consequently,
\[\int_K |v_n|\,d^2z\le 2\pi R^2C_R.\]
Hence $(v_n)$ is bounded in $L^1_{\rm loc}(\C)$. Proposition~\ref{prop:subharmonic-tools}(2) therefore gives relative compactness in $L^1_{\rm loc}(\C)$ and every limit has a subharmonic representative. In particular, being locally integrable, it is finite almost everywhere.

Finally, factorization of $p_n$, together with $\widetilde{\Delta}\log|z-a|=\delta_a$, gives $\widetilde{\Delta} v_n=\frac{1}{n}[Z(p_n)]$ in the sense of distributions. Therefore if $v_n\to v$ in $L^1_{\mathrm{loc}}(\C)$ and simultaneously $\frac{1}{n}[Z(p_n)]\to\sigma$
vaguely along the same subsequence, both sides converge distributionally, and hence $\widetilde{\Delta}v=\sigma$.
\end{proof}

For $0\le k<n$, define the linear operator on $\mathcal{P}_n$ by
\[\D_{n,k}p:=\frac{p^{(k)}}{(n)_k}.\] Note that relative to the orthonormal monomial basis of $\mathcal{P}_n$, 
\[\D_{n,k}z^j=\begin{cases}\frac{(j)_k}{(n)_k}z^{j-k}\quad &j\ge k,\\
 \qquad 0\quad &j<k.\end{cases}\]
Consequently,
\begin{equation}\label{eq:bin-estimate}\ker \D_{n,k}=\operatorname{span}\{1,z,\ldots,z^{k-1}\},
 \qquad
 \binom{n}{k}^{-1}=\frac{k!}{(n)_k}\leq\frac{(j)_k}{(n)_k}\le 1
 \quad(k\le j\le n),
\end{equation}
with the convention that this span is $\{0\}$ when $k=0$. The nonzero singular values are $(j)_k/(n)_k$. In particular,
\begin{equation}\label{eq:contraction}
\|\D_{n,k}p\|_{\mathrm{c},n}\le \|p\|_{\mathrm{c},n}\qquad\text{for }p\in \mathcal{P}_n\qquad\text{and}\qquad \|\D_{n,k}h\|_{\mathrm{c},n}\ge\binom{n}{k}^{-1}\|h\|_{\mathrm{c},n}\qquad\text{for }h\perp\ker \D_{n,k}.
\end{equation}
If $k=k_n=o(n)$, then using the inequality $\binom{n}{k}\le (en/k)^k$, we obtain
\[\frac{1}{n}\log \frac{(n)_k}{k!}=\frac{1}{n}\log\binom{n}{k}\le \frac{k}{n}\log\frac{en}{k}\to 0.\]

For a degree-$n$ polynomial $P_n$, write
\[\alpha_n:=\frac{1}{n}[Z(P_n)].\]

\begin{lemma}\label{lem:no-escape}
    Let $P_n$ have degree $n$, let $k_n=o(n)$, and suppose $\alpha_n=\frac{1}{n}\sum_{j=1}^n \delta_{x_j}\weak \mu$ where $x_1,\ldots, x_n$ are roots of $P_n$ and $\mu$ is a probability measure. Then
    \[d_n:=\frac{1}{n}\log\frac{\|\D_{n,k_n}P_n\|_{c,n}}{\|P_n\|_{c,n}}\]
    is bounded. More precisely,
    \[\inf_n d_n>-\infty,\qquad d_n\le0\quad\text{for every }n.\]
\end{lemma}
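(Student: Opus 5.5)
The upper bound $d_n\le 0$ follows at once from the contraction estimate \eqref{eq:contraction}: $\|\D_{n,k_n}P_n\|_{\mathrm c,n}\le\|P_n\|_{\mathrm c,n}$. The lower bound is where the work lies. The plan is to write $P_n=h_n+r_n$ with $r_n\in\ker\D_{n,k_n}=\operatorname{span}\{1,\dots,z^{k_n-1}\}$ and $h_n\perp\ker\D_{n,k_n}$. Then \eqref{eq:contraction} and the binomial estimate give
\[
d_n\ \ge\ -\frac1n\log\binom{n}{k_n}+\frac1n\log\frac{\|h_n\|_{\mathrm c,n}}{\|P_n\|_{\mathrm c,n}},
\]
and the first term tends to $0$ because $k_n=o(n)$. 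So it suffices to find $C<\infty$ with
\[
\|h_n\|_{\mathrm c,n}\ \ge\ e^{-Cn}\,\|P_n\|_{\mathrm c,n}.
\]
In words: the coefficients of degree at least $k_n$ must carry an exponentially non-negligible share of the coefficient norm.

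The naive approach uses that $P_n$ is monic, so $\|h_n\|_{\mathrm c,n}\ge 1$, together with Parseval to get $\|P_n\|_{\mathrm c,n}\le\prod_j(1+|x_j|)$. This fails because weak convergence does not control $\frac1n\sum_j\log(1+|x_j|)$: a few roots may escape to infinity. To handle this, I would fix $R\ge 2$ with $\mu(\{|x|>R\})<\varepsilon$. Weak convergence then gives at most $m_n\le\varepsilon n$ large roots $|x_j|>R$ for $n$ large. Factor
\[
P_n=b_n\,S_n\,Q_n,\qquad b_n=\prod_{\text{large}}(-x_j),\quad Q_n(z)=\prod_{\text{large}}\Bigl(1-\frac{z}{x_j}\Bigr),
\]
where $S_n$ is monic with all roots in $\overline B(0,R)$. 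Since both sides of the desired inequality scale linearly, the factor $b_n$ drops out. The point is that $S_nQ_n$ has all its coefficients bounded by $(1+R)^n 2^{m_n}$, and therefore $\|S_nQ_n\|_{\mathrm c,n}\le e^{C_Rn}$ with $C_R$ independent of where the large roots are.

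For the lower bound on the high-degree part of $S_nQ_n$, I would work on a circle $|z|=\rho$ with $\rho\in[2R,4R]$. Monomials remain orthogonal there, so
\[
\frac{1}{2\pi}\int_0^{2\pi}|f(\rho e^{it})|^2\,dt=\sum_j|f_j|^2\rho^{2j}.
\]
By Jensen's inequality and $|S_n|\ge R^{n-m_n}$ on that circle, the $L^2(|z|=\rho)$ norm of $S_nQ_n$ is at least $R^{n-m_n}\exp\bigl(\int\log|Q_n|\bigr)$. Averaging over $\rho$ and using local integrability of $\log|1-z/x_j|$ gives a good radius where the $Q_n$ contribution is at least $e^{-c\,m_n}$. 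The low-degree part contributes at most $k_n\rho^{2k_n}e^{2C_Rn}$ to the squared $L^2(\rho)$ norm. Once the low-degree part is shown to be smaller, the high-degree part has $L^2(\rho)$ norm $\gtrsim R^{n}e^{-c'\varepsilon n}$. Converting back to coefficient norm costs at most a factor $\rho^{-n}\ge(4R)^{-n}$, which yields $\|h_n\|_{\mathrm c,n}/\|P_n\|_{\mathrm c,n}\ge e^{-Cn}$.

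The main obstacle is the comparison in the last step. The crude bound on the low-degree part, $e^{2C_Rn}\rho^{2k_n}$, is not obviously smaller than $R^{2(n-m_n)}$, since $C_R\approx\log(1+R)$. I would resolve this by exploiting the freedom in $\rho$: taking $\rho$ much larger than $R$, say $\rho=R^{A}$, while keeping $k_n\log\rho=o(n)$. Then $|S_n|\approx\rho^{n-m_n}$ on the circle, which beats $\rho^{2k_n}e^{2C_Rn}$ once $A$ is large. The price is that large roots can now lie near $|z|=\rho$. I would control that by the same averaging-over-radii argument, applied to the at most $\varepsilon n$ factors of $Q_n$. Finitely many initial $n$ are harmless, since each $d_n$ is finite.
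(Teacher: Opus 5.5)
Your proposal is correct, but it proves the lower bound by a different route than the paper. Both arguments use the same splitting of $P_n$ along $\ker\D_{n,k_n}$ and the same singular-value bound $\binom{n}{k_n}^{-1}$. They differ in how they show that the degree-$\ge k_n$ part is not exponentially negligible.

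The paper argues softly, by contradiction. If $d_n\to-\infty$ along a subsequence, then $\|P_n^\perp\|_{\mathrm c,n}$ is super-exponentially small. Lemma~\ref{lem:polynomial-compactness} then shows that $\frac1n\log|P_n|$ and $\frac1n\log|\widetilde P_n^0|$ have the same $L^1_{\rm loc}$-limit. Since $\deg\widetilde P_n^0<k_n=o(n)$, this forces $\mu=\widetilde\Delta s=\widetilde\Delta\ell=0$. The hypothesis $\alpha_n\weak\mu$ enters only through the fact that the limit has mass one.

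You instead use the hypothesis as tightness: at most $\varepsilon n$ roots lie outside $\overline B(0,R)$. You then give a direct estimate via the circle identity $\frac1{2\pi}\int_0^{2\pi}|f(\rho e^{it})|^2\,dt=\sum_j|f_j|^2\rho^{2j}$, Jensen's inequality, and an $\ell^1$ coefficient bound that is insensitive to escaping roots. What your route buys is that it is elementary, needs no compactness or subsequence extraction, and gives an explicit constant: $\inf_n d_n$ is bounded below in terms of the tightness modulus $(R,\varepsilon)$ and the finitely many initial $d_n$. The paper's route is shorter because it recycles machinery needed elsewhere, but it yields no constant.

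You can simplify your last step. You need neither the pointwise bound $|S_n|\ge R^{n-m_n}$ nor the averaging over radii. Jensen's formula gives, for every $\rho\ge R$, even when some large roots lie on $|z|=\rho$,
\[
\frac1{2\pi}\int_0^{2\pi}\log|(S_nQ_n)(\rho e^{it})|\,dt=(n-m_n)\log\rho+\sum_{|x_j|>R}\log^+\frac{\rho}{|x_j|}\ \ge\ (n-m_n)\log\rho .
\]
So the $L^2(|z|=\rho)$ norm of $S_nQ_n$ is at least $\rho^{n-m_n}$. Take the fixed radius $\rho=(1+R)^2$ and $\varepsilon<1/4$. Then $\rho^{2(n-m_n)}$ dominates the squared low-degree contribution $\rho^{2k_n}(1+R)^{2n}4^{m_n}$ for large $n$. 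This gives directly
\[
\frac{\|h_n\|_{\mathrm c,n}}{\|P_n\|_{\mathrm c,n}}\ \ge\ \frac{1}{\sqrt2}\,\rho^{-m_n}(1+R)^{-n}2^{-m_n}.
\]
The obstacle you flagged in your final paragraph therefore disappears.
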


\begin{proof}
    \eqref{eq:contraction} gives $d_n\le 0$. Suppose along a subsequence $d_n\to -\infty$. Then $k_n\ge 1$ eventually for sufficiently large $n$, otherwise $k_n=0$ implies $d_n=0$. Normalize $\|P_n\|_{\mathrm{c},n}=1$ and decompose orthogonally
    \[P_n=P_n^0+P_n^\perp,\qquad P_n^0\in\ker\D_{n,k_n},\quad P_n^\perp\perp \ker\D_{n,k_n}.\]
    The second part in \eqref{eq:contraction} and the definition of $d_n$ imply 
    \begin{equation}\label{eq:H-small}
    \|P_n^{\perp}\|_{\mathrm{c},n}\le 
    \binom{n}{k_n}\|\D_{n,k_n}P_n^\perp\|_{\mathrm{c},n} =
    \binom{n}{k_n}\|\D_{n,k_n}P_n\|_{\mathrm{c},n},\qquad
    \frac{1}{n}\log\|P_n^{\perp}\|_{\mathrm{c},n}\to -\infty.
    \end{equation}
    So $\|P_n^\perp\|_{\mathrm{c},n}\to 0$ hence $\|P_n^0\|_{\mathrm{c},n}\to 1$ by the Pythagorean theorem. Put $\widetilde{P}_n^0:=P_n^0/\|P_n^0\|_{\mathrm{c},n}$. Since $\deg\widetilde{P}_n^0<k_n=o(n)$, $\frac{1}{n}[Z(\widetilde{P}_n^0)]\to 0$ vaguely. By Lemma~\ref{lem:polynomial-compactness}, pass to a further subsequence on which
    \[\ell_n:=\frac{1}{n}\log|\widetilde{P}_n^0|\to \ell,\qquad s_n:=\frac{1}{n}\log|P_n|\to s\quad\text{in }L_{\rm loc}^1(\C)\text{ and almost everywhere}.\]
    The limits are finite almost everywhere. At every point where $\ell_n(z)\to \ell(z)>-\infty$, \eqref{eq:estimate} and \eqref{eq:H-small} give
    \[\frac{|P_n^\perp(z)|}{|\widetilde{P}_n^0(z)|}\le\sqrt{n+1}\max\{1,|z|\}^n\|P_n^\perp\|_{\mathrm{c},n}\exp(-n\ell_n(z))\to 0.\] This can be seen by taking the logarithm and dividing by $n$. Because $P_n=\|P_n^0\|_{\mathrm{c},n}\widetilde{P}_n^0+P_n^\perp$ and $\|P_n^0\|_{\mathrm{c},n}\to 1$, it follows that 
    \[s_n(z)-\ell_n(z)=\frac{1}{n}\log\left|\frac{P_n(z)}{\widetilde{P}_n^0(z)}\right|=\frac{1}{n}\log\left|\|P_n^0\|_{\mathrm{c},n}+\frac{P_n^\perp(z)}{\widetilde{P}_n^0(z)}\right|\to \frac{1}{n}\log|1+0|=0.\]
    Thus, $s=\ell$ almost everywhere. Applying $\widetilde{\Delta}$ gives 
    \[\mu=\widetilde{\Delta}s=\widetilde{\Delta}\ell=0\] 
    which contradicts $\mu(\C)=1$. So $(d_n)$ is bounded below.
\end{proof}

\section{The non-polar-carried case: postselection}\label{sec:non-polar}
\subsection{A polynomial postselection estimate}
The main probabilistic difficulty in this section is that the polynomial to which we apply a small-value estimate is not fixed in advance. More precisely, in Lemma~\ref{lem:postselection}, the linear space $V_N$ is fixed before the sample points $T_1,\ldots,T_{m_N}$ are sampled, but after observing these points one is allowed to choose a polynomial $v\in V_N$. We refer to this choice of $v$ after observing the sample as \emph{postselection}. Thus we must control the single existential event
\[\left\{\exists v\in V_N:v(0)=1,\quad|v(T_i)|\leq e^{-cN}\text{ for every }1\leq i\leq m_N\right\},\]
rather than merely obtain a probability bound for each fixed $v\in V_N$.

For a fixed polynomial $v$, define its small-value set by
\[E_v:=\{t\in\C:|v(t)|\leq e^{-cN}\}.\]
The proof of Lemma~\ref{lem:postselection} first shows that $\nu(E_v)$ is small, uniformly over all polynomials $v\in V_N$ satisfying $v(0)=1$. To pass from this pointwise estimate to a bound that is uniform over the entire family $\{E_v\}$, we control the combinatorial complexity of this family through its shatter function.

For a set $S$ and a class $\mathcal F$ of subsets of $S$, define
\[\mathsf S_{\mathcal F}(m):=\sup_{x_1,\ldots,x_m\in S}\# \left\{F\cap\{x_1,\ldots,x_m\}:F\in\mathcal F\right\}.\]
Thus $\mathsf S_{\mathcal F}(m)$ is the largest number of distinct membership patterns that sets in $\mathcal F$ can produce on $m$ prescribed points.

We use two standard combinatorial estimates. Warren's theorem (Lemma~\ref{lem:warren}) bounds the number of membership patterns when membership is described by polynomial inequalities in finitely many real parameters. The Vapnik--Chervonenkis inequality (Lemma~\ref{lem:vc-uniform}) then converts a bound on the shatter function into a uniform concentration estimate for empirical measures. In our application, the real and imaginary parts of the coefficients parametrizing \(v\) are the real parameters, and each condition
\[t_i\in E_v\iff|v(t_i)|^2-e^{-2cN}\leq 0\]
is a polynomial inequality of degree two in those parameters.

Here and below,$\Prob^*$ denotes outer probability:
\[\Prob^*(E):=\inf\{\Prob(A):E\subseteq A,\ A\text{ measurable}\}.\]
It agrees with $\Prob$ on measurable events. Moreover, the probability of any measurable subset of $E$ is at most $\Prob^*(E)$.
\begin{lemma}\cite[Theorem~3]{War68}\label{lem:warren}
    Let $f_1,\ldots, f_M$ be real polynomials of degree at most $D$ in $q\ge 1$ real variables. If $M\ge q$, the number of strict sign patterns
    \[\left(\operatorname{sgn}f_1(x),\ldots,\operatorname{sgn}f_M(x)\right)\in\{-1,1\}^M \] realized by points $x\in \R^q$ at which no $f_i$ vanishes is at most $(4eDM/q)^q$. 
\end{lemma}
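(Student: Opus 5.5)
The statement is a sign-pattern bound quoted from \cite[Theorem~3]{War68}, so in the paper it is simply invoked. If I had to reprove it, I would follow Warren's original route: reduce sign patterns to connected components, count components by critical points on perturbed level sets, and finish with a Milnor--Thom type bound and binomial arithmetic.

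\emph{Reduction to components.} Let $W:=\R^q\setminus\bigcup_{i}\{f_i=0\}$, which is open. Each sign vector $(\operatorname{sgn}f_1,\ldots,\operatorname{sgn}f_M)$ is constant on every connected component of $W$. So it suffices to show that $W$ has at most $(4eDM/q)^q$ connected components. Since only finitely many components need to be distinguished at a time, I would intersect with a large ball. I would also replace each $f_i$ by a generic small perturbation $f_i\pm\varepsilon$ chosen so that the components survive. This way every component to be counted becomes bounded, and all relevant hypersurfaces become smooth and in general position, meaning at most $q$ of them meet at any point and they meet transversally.

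\emph{Counting via critical points.} Take a bounded component $C$ of $W$. Its closure meets some collection of perturbed hypersurfaces $\{f_{i_1}=\pm\varepsilon\},\ldots,\{f_{i_j}=\pm\varepsilon\}$ with $j\le q$. A generic linear function (or $|x|^2$) attains its minimum on $\overline C$ at a point that is a critical point of that function restricted to the smooth variety $\{f_{i_1}=\pm\varepsilon,\ldots,f_{i_j}=\pm\varepsilon\}$. Different components give different such points. Hence the number of components is at most
\[\sum_{j=0}^{q}\binom{M}{j}\,2^j\,N(D,q),\]
where $N(D,q)$ bounds the number of nondegenerate critical points on one complete intersection.

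\emph{Bounding critical points.} The critical points are the solutions of a square polynomial system: the $j$ equations $f_{i_l}=\pm\varepsilon$ together with the minors expressing rank deficiency, via Lagrange multipliers. B\'ezout, in the Milnor--Thom form, bounds their number by $D(2D-1)^{q-1}\le (2D)^q$.

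\emph{Binomial arithmetic.} It then remains to show
\[\sum_{j\le q}\binom{M}{j}2^j(2D)^q\le (2D)^q\,2^q\Bigl(\frac{eM}{q}\Bigr)^q=\Bigl(\frac{4eDM}{q}\Bigr)^q .\]
Here $2^j\le 2^q$, and the standard estimate $\sum_{j\le q}\binom{M}{j}\le (eM/q)^q$ holds because $M\ge q$.

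I expect the main obstacle to be the perturbation step. One must ensure that passing to $f_i\pm\varepsilon$ and intersecting with a ball neither merges nor destroys components of $W$, so that each original component keeps a distinct extremal point. One must also arrange the genericity needed for the critical points to be nondegenerate, so that the B\'ezout count applies. I would handle both points with Sard's theorem applied to the choice of $\varepsilon$ and of the linear functional.
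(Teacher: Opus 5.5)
The paper does not prove this lemma; it imports it from Warren, so the only question is whether your outline stands on its own. Its skeleton is the right one: pass to components, charge each component to an extremal point on a stratum, bound the critical points per stratum, then use $\sum_{j\le q}\binom{M}{j}\le(eM/q)^q$. Your closing arithmetic is also correct. However, the counting step does not go through as written.

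\textbf{The ball truncation.} After you intersect with a ball, the minimum of a generic linear $\ell$ over $\overline{C}$ can lie on the boundary sphere. This already happens for $q=M=D=1$, $f_1=x$: the component $(-R,-\varepsilon)$ is minimized at $-R$. Such a point is not a critical point of $\ell$ on any stratum $\{f_{i_1}=\pm\varepsilon,\dots\}$, so the injection into your count breaks. Treating the sphere as one more hypersurface instead adds strata that your sum does not contain. Your parenthetical $|x|^2$ is the right fix, provided it is centered at a generic point $a$ and the ball is dropped. The function $h_a(x)=|x-a|^2$ is proper, so it attains its minimum on every $\overline{C}$. An interior minimum forces $x_0=a$, which is exactly your $j=0$ term; otherwise $x_0$ is a local minimum of $h_a$ on the stratum through it. If you count components of $U=\{x:|f_i(x)|>\varepsilon_i \text{ for all } i\}$, then near any boundary point $U$ is a single local orthant. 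That is what guarantees that distinct components give distinct points.

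\textbf{A common $\varepsilon$.} One common level cannot give general position. For $f_1=x_1$ and $f_2=x_1+x_2^2$, the curves $\{f_1=\varepsilon\}$ and $\{f_2=\varepsilon\}$ are tangent at $(\varepsilon,0)$ for every $\varepsilon$. Take independent levels $\varepsilon_1,\dots,\varepsilon_M$ and apply Sard to $x\mapsto(f_i(x))_{i\in S}$ for each $S$. This also shows that no $q+1$ of the hypersurfaces meet.

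\textbf{The bound on $N(D,q)$.} Milnor's quantity $D(2D-1)^{q-1}$ bounds Betti numbers of a variety, not the number of critical points of a prescribed function on a complete intersection. The system you describe does not produce it. The minor formulation is not square. The Lagrange system has $q+j$ unknowns and $q+j$ equations of total degree at most $D$, so plain B\'ezout gives only $D^{q+j}$, which exceeds $(2D)^q$ as soon as $D^j>2^q$.

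The missing idea is the bihomogeneous structure in $(x,\lambda)$. The $j$ constraints have bidegree $(D,0)$, and the $q$ Lagrange equations have bidegree at most $(D-1,1)$; this holds for $\ell$, and for $h_a$ when $D\ge 2$. (For $D=1$ the strata are affine and $h_a$ has one critical point on each.) Multihomogeneous B\'ezout then bounds the nondegenerate critical points by $\binom{q}{j}D^j(D-1)^{q-j}\le(2D-1)^q$. This is even at most $D(2D-1)^{q-1}$, so your number is right, but it needs this argument.

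Alternatively, bypass critical points. Use an induction that adds one hypersurface at a time, applied inside each stratum; each connected piece of a new hypersurface splits a region into at most two. This gives $b_0(U)\le\sum_{(S,\sigma)}b_0(V_{S,\sigma})$. Milnor's component bound $b_0(V_{S,\sigma})\le D(2D-1)^{q-1}$ then feeds directly into your final estimate $(4eDM/q)^q$.
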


\begin{lemma}\cite[Theorem~2]{VC71}.\label{lem:vc-uniform}
Let $S$ and $\Theta$ be standard Borel spaces, let $\nu$ be a Borel probability measure on $S$, and let $B\subset\Theta\times S$ be Borel. For $\theta\in\Theta$, set
\[F_\theta:=\{x\in S:(\theta,x)\in B\},
    \qquad \mathcal F:=\{F_\theta:\theta\in\Theta\}.\]
Let $T_1,\ldots,T_m$ be i.i.d. with law $\nu$, and write $\nu_m:=\frac{1}{m}\sum_{i=1}^m\delta_{T_i}$.
Then, for $0<\eta<1$ and $m\ge 2/\eta^2$,
\[\Prob^*\!\left(\sup_{F\in\mathcal F}|\nu_m(F)-\nu(F)|>\eta\right)
 \le
 4\mathsf{S}_{\mathcal F}(2m)\exp\left(-\frac{m\eta^2}{8}\right).\]
\end{lemma}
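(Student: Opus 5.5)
The plan is the classical symmetrization argument of Vapnik and Chervonenkis: a ghost sample, then random sign swaps, then a union bound over membership patterns combined with Hoeffding's inequality. First I deal with measurability. The event
\[A:=\Bigl\{\sup_{\theta\in\Theta}|\nu_m(F_\theta)-\nu(F_\theta)|>\eta\Bigr\}\]
is the projection onto $S^m$ of the set of $(\theta,t_1,\ldots,t_m)$ with $|\frac1m\sum_i 1_B(\theta,t_i)-\nu(F_\theta)|>\eta$. Here $\theta\mapsto\nu(F_\theta)$ is Borel by Fubini, since $B$ is Borel. So that set is Borel in $\Theta\times S^m$. Because $\Theta$ and $S$ are standard Borel, its projection $A$ is analytic, hence universally measurable. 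The same holds for the symmetrized events below. Consequently $\Prob^*(A)$ is the probability of $A$ in the completion, and Fubini--Tonelli may be used freely on completed product measures. I expect this bookkeeping to be the only delicate point; the outer probability in the statement simply absorbs it.

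\emph{Step 1 (ghost sample).} Let $T_1',\ldots,T_m'$ be an independent copy of the sample, with empirical measure $\nu_m'$. On $A$, choose (measurably, or simply pointwise for the Fubini argument) some $F$ with $|\nu_m(F)-\nu(F)|>\eta$. For this $F$, Chebyshev's inequality conditionally on $T$ gives
\[\Prob\bigl(|\nu_m'(F)-\nu(F)|>\eta/2\bigr)\le \frac{1/(4m)}{\eta^2/4}=\frac1{m\eta^2}\le \frac12,\]
using $m\ge 2/\eta^2$. On the complementary event the triangle inequality gives $|\nu_m(F)-\nu_m'(F)|>\eta/2$. Integrating over $T$ yields
\[\Prob^*(A)\le 2\,\Prob^*\Bigl(\sup_{F\in\mathcal F}|\nu_m(F)-\nu_m'(F)|>\eta/2\Bigr).\]

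\emph{Step 2 (random swaps and union bound).} Let $\sigma_1,\ldots,\sigma_m$ be independent Rademacher signs. Swapping $T_i$ and $T_i'$ does not change the joint law. Hence the right-hand probability equals the probability that
\[\sup_{F\in\mathcal F}\Bigl|\frac1m\sum_{i=1}^m\sigma_i\bigl(1_F(T_i)-1_F(T_i')\bigr)\Bigr|>\eta/2 .\]
Now condition on the $2m$ points. The quantity inside the supremum depends on $F$ only through the trace $F\cap\{T_1,\ldots,T_m,T_1',\ldots,T_m'\}$, and there are at most $\mathsf S_{\mathcal F}(2m)$ such traces. For a fixed trace the summands $\sigma_i(1_F(T_i)-1_F(T_i'))$ are independent, centered, and lie in $[-1,1]$. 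Hoeffding's inequality therefore bounds the conditional probability by
\[2\exp\Bigl(-\frac{2(m\eta/2)^2}{4m}\Bigr)=2\exp\Bigl(-\frac{m\eta^2}{8}\Bigr).\]
A union bound over traces, followed by integration over the points, gives at most $2\,\mathsf S_{\mathcal F}(2m)\exp(-m\eta^2/8)$. Combining this with the factor $2$ from Step 1 yields the claimed bound $4\,\mathsf S_{\mathcal F}(2m)\exp(-m\eta^2/8)$.
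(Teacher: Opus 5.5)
Your proposal is correct and is essentially the argument behind the cited result. The paper gives no proof of its own and simply imports the lemma from \cite{VC71}, whose proof is the same ghost-sample symmetrization followed by a union bound over the at most $\mathsf S_{\mathcal F}(2m)$ traces on the double sample; Vapnik and Chervonenkis average over permutations of the $2m$ points with a hypergeometric tail bound, whereas your coordinatewise swaps plus Hoeffding reach the same constants $4$ and $1/8$, and your analytic-projection measurability bookkeeping matches the paper's remark after the lemma explaining why it is stated for standard Borel spaces with outer probability.
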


In the proof of Lemma~\ref{lem:postselection}, the affine slice
\[
\{v\in V_N:v(0)=1\}
\]
will be parametrized by \(\mathbb C^d\) for some
\(d\leq \dim V_N-1\). Under this parametrization, the incidence set
\[
\{(\alpha,t)\in\mathbb C^d\times\mathbb C:
|v_\alpha(t)|\leq e^{-cN}\}
\]
is closed. Hence the resulting family of small-value sets satisfies
the Borel-parametrization hypothesis of Lemma~\ref{lem:vc-uniform}.
The existential postselection event is a projection of a Borel set,
and is therefore analytic and universally measurable. Consequently,
on the completed probability space, its outer probability agrees
with its ordinary probability.

We now formulate the resulting postselection estimate.

\begin{lemma}[Polynomial postselection]\label{lem:postselection}
Let $K\subset\C$ be compact, fix $L<\infty$ and $\theta>0$, and let
$r_N=o(N)$. For every $c>0$, there exists $\rho>0$ depending only on $c, K, L$ with the following property.

Let $\nu$ be any Borel probability measure supported on $K$ such that $L_\nu=\sup_{w\in\C} \int_\C\logm|t-w|\,d\nu(t)\le L$.
For each positive integer $N$, let $M_N$ and $m_N$ satisfy $M_N\le\rho N$ and $m_N\ge\theta N$, let $V_N\subset\C[t]_{\le M_N}:=\{v\in \C[t]:\deg v\le M_N\}$ be a complex linear space with $\dim V_N\le r_N$, and let
$T_1,\ldots,T_{m_N}$ be i.i.d. with law $\nu$. Then
\[\Prob^*\!\left( \exists v\in V_N:\ v(0)=1,\quad |v(T_i)|\le e^{-cN}
     \text{ for every }1\le i\le m_N\right)
 \le e^{-c_1m_N}\]
for all sufficiently large $N$ where $c_1>0$, and one may take $c_1=1/64$.

The choice of $\rho$ and the estimate are uniform over all measures $\nu$
and spaces $V_N$ satisfying the preceding assumptions. The large-$N$
threshold may depend on the fixed parameters and on the sequence $(r_N)$.
\end{lemma}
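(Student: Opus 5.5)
The plan is to combine a uniform bound on $\nu(E_v)$ with a uniform concentration estimate over the whole family $\{E_v\}$. Let $\mathcal A_N:=\{v\in V_N: v(0)=1\}$. If some $v\in\mathcal A_N$ has $|v(T_i)|\le e^{-cN}$ for all $i$, then $\nu_{m_N}(E_v)=1$. So it suffices to prove two things: first, $\nu(E_v)\le 1/2$ for every $v\in\mathcal A_N$ once $\rho$ is small; second, with $\eta=1/2$, the supremum $\sup_{v\in\mathcal A_N}|\nu_{m_N}(E_v)-\nu(E_v)|$ exceeds $1/2$ only with probability at most $e^{-m_N/64}$.

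\textbf{Step 1 (uniform pointwise bound).} Fix $v\in\mathcal A_N$. If $v$ is constant, then $v\equiv1$ and $E_v=\emptyset$. Otherwise write $v(t)=a\prod_{j=1}^{M'}(t-z_j)$ with $1\le M'\le M_N$. Since $v(0)=1$ we have $z_j\neq0$, and
\[
\log|v(t)|=\sum_{j=1}^{M'}\log\Bigl|1-\tfrac{t}{z_j}\Bigr|.
\]
Choose $R$ with $K\subset B(0,R)$, and bound each term from below for $t\in K$.
\begin{itemize}
\item If $|z_j|\ge 2R$, then $\log|1-t/z_j|\ge-\log 2$.
\item If $|z_j|<2R$, then $\log|t-z_j|-\log|z_j|\ge-\logm|t-z_j|-\logp(2R)$.
\end{itemize}
Hence, pointwise on $K$,
\[
\logm|v(t)|\le\sum_j\bigl(\logm|t-z_j|+C_R\bigr).
\]
Integrating against $\nu$ and using $L_\nu\le L$ gives
\[
\int\logm|v|\,d\nu\le M_N(L+C_R)\le\rho N(L+C_R).
\]
By Markov's inequality, $\nu(E_v)=\nu(\logm|v|\ge cN)\le\rho(L+C_R)/c$. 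Choosing $\rho:=c/(2(L+C_R))$ makes this at most $1/2$. This $\rho$ depends only on $c$, $K$ and $L$, as the statement requires.

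\textbf{Step 2 (shatter function via Warren).} Parametrize $\mathcal A_N$ affinely as $v_\alpha=v_0+\sum_{l=1}^d\alpha_l w_l$ with $\alpha\in\C^d$ and $d\le r_N-1$; this uses $q=2d$ real parameters. If $d=0$ the family is a single set and Step 1 plus Hoeffding already suffices. For fixed points $x_1,\dots,x_{2m}$, the membership pattern of $E_{v_\alpha}$ is the sign pattern of the degree-two polynomials $f_i(\alpha)=|v_\alpha(x_i)|^2-e^{-2cN}$, with $\le0$ versus $>0$.

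Warren's lemma counts only strict sign patterns. To handle this, replace $e^{-2cN}$ by $e^{-2cN}+\varepsilon$. Each of the finitely many realized non-strict patterns is witnessed by some $\alpha$. For $\varepsilon>0$ small enough, that $\alpha$ realizes the same pattern strictly for the shifted polynomials, except possibly at the finitely many $\varepsilon$ for which some $f_i(\alpha)=\varepsilon$, which we avoid. Lemma~\ref{lem:warren} with $D=2$ and $M=2m\ge q$ (valid for large $N$, since $d=o(N)$ and $m\ge\theta N$) then gives
\[
\mathsf S_{\mathcal F}(2m)\le\bigl(8e\,m/d\bigr)^{2d}.
\]
Because $d\le r_N=o(N)$ and $m\ge\theta N$, we get $\log\mathsf S_{\mathcal F}(2m)\le 2d\log(8em)=o(m)$.

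\textbf{Step 3 (VC and conclusion).} By the remarks preceding the statement, the incidence set is closed, so Lemma~\ref{lem:vc-uniform} applies. Take $\eta=1/2$; the condition $m\ge 8$ holds for large $N$. The lemma gives
\[
\Prob^*\bigl(\exists v\in\mathcal A_N:\nu_m(E_v)=1\bigr)\le 4\,\mathsf S_{\mathcal F}(2m)\,e^{-m/32}\le e^{-m/64}
\]
for $N$ large, using Step 2. The threshold depends only on $\theta$, $(r_N)$ and the fixed constants, and everything is uniform in $\nu$ and $V_N$.

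The step I expect to need the most care is Step 1: it must give a bound uniform over all $v$ with no a priori control on the leading coefficient. The factorization normalized by $v(0)=1$ is what makes this work, since it turns the bound into a sum of $M_N$ terms each controlled by $L_\nu$. The strict versus non-strict sign issue in Warren's bound is a minor technical point.
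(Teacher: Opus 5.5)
Your architecture matches the paper's. You use the normalization $v(t)=\prod_j(1-t/z_j)$ forced by $v(0)=1$, together with Markov's inequality, to bound $\nu(E_v)$ uniformly. You then apply Warren's theorem to the quadratics $|v_\alpha(x_i)|^2-e^{-2cN}$ in the real coordinates of $\alpha$, and finish with the Vapnik--Chervonenkis inequality at $\eta=1/2$. Your single fixed shift $\varepsilon$, chosen below all positive values of the $f_i$ at finitely many witnesses, is a legitimate substitute for the paper's extra variable $s$.

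\textbf{The gap is in the last estimate of Step~2.} You drop the factor $1/d$ from Warren's bound and assert $\log\mathsf S_{\mathcal F}(2m)\le 2d\log(8em)=o(m)$. Under the lemma's hypotheses ($d\le r_N-1$, $r_N=o(N)$, $m\ge\theta N$) this fails in general. Take $r_N=\lfloor N/\log\log N\rfloor$, $d=r_N-1$ and $m=\lceil\theta N\rceil$. Then $2d\log(8em)\sim 2N\log N/\log\log N$, which dominates $m/32$, so $4\mathsf S_{\mathcal F}(2m)e^{-m/32}\le e^{-m/64}$ does not follow.

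As written, your argument only covers $r_N\log N=o(N)$ when $m\asymp N$. In the application, where $\dim V\le k_n$ and $m=\lceil\theta n\rceil$, that is only the $k_n=o(n/\log n)$ regime, not the full $k_n=o(n)$ the paper needs. The repair is to keep the denominator you already wrote down, as the paper does. With $x:=d/m\le r_N/(\theta N)\to0$,
\[
\log\mathsf S_{\mathcal F}(2m)\le 2d\log\frac{8em}{d}=2m\,x\log\frac{8e}{x}=o(m),
\]
because $x\log(8e/x)\to0$ as $x\to0$. This bound is uniform in $\nu$ and $V_N$.

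\textbf{A smaller slip in Step~1 and Step~3.} With $\rho=c/(2(L+C_R))$ you only get $\nu(E_v)\le 1/2$, so on the event you have $|\nu_m(E_v)-\nu(E_v)|\ge 1/2$. Lemma~\ref{lem:vc-uniform}, however, controls the strict event $\sup_F|\nu_m(F)-\nu(F)|>1/2$. Shrink $\rho$ so that, say, $\nu(E_v)\le 1/4$; the paper arranges $\nu(E_v)\le 1/16$. You should also dispose of the trivial case $\{v\in V_N:v(0)=1\}=\emptyset$ before parametrizing the affine slice.
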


\begin{proof}
Suppose that $v\in V_N$ satisfies $v(0)=1$. Write $\ell:=\deg v\le M_N$ and let $w_1,\ldots,w_\ell$ be its zeros, counted with multiplicity. Since $v(0)=1$, none of the $w_j$ is zero and $v(t)=\prod_{j=1}^{\ell}\left(1-\frac{t}{w_j}\right)$.
Therefore
\begin{align*}
    \int_\C\logm|v(t)|\,d\nu(t) 
    &\le \int_\C\sum_{j=1}^\ell \logp\frac{|w_j|}{|t-w_j|}\,d\nu(t)
    \le \int_\C\sum_{j=1}^\ell (C_K+\logm|t-w_j|)\,d\nu(t)\\
    &\le \sum_{j=1}^{\ell} \left(C_K+\int_\C\logm|t-w_j|\,d\nu(t)\right) 
    \le (C_K+L)\ell
    \le (C_K+L)M_N
\end{align*}
where $C_K:=\log(1+\sup_{z\in K}|z|)$ and the second inequality is induced by the triangle inequality applied on $|w|\le |t-w|+|t|\le|t-w|+\sup_{z\in K}|z|$. For
\[E_v:=\{t\in\C:|v(t)|\le e^{-cN}\},\]
since $\log^-|v|\geq cN$ on $E_v$, Markov's inequality gives $\nu(E_v)\le \frac{1}{cN}\int \logm |v(t)|\, d\nu(t)\le (C_K+L)\rho/c$. So we can choose $\rho$ such that $\nu(E_v)\le 1/16$ for every $v\in V_N$ satisfying $v(0)=1$.

If the affine slice $\{v\in V_N:v(0)=1\}$ is empty, then the event in the statement is empty and there is nothing to prove. Otherwise, choose $v_0\in V_N$ with $v_0(0)=1$, and let $u_1,\ldots,u_d$ be a basis of $\{u\in V_N:u(0)=0\}$. Every $v\in V_N$ satisfying $v(0)=1$ then has a unique representation
\[v_\xi = v_0+\sum_{j=1}^d\xi_j u_j, \qquad
    \xi=(\xi_1,\ldots,\xi_d)\in\C^d.\]
Since evaluation at the origin is a nonzero linear functional on $V_N$, $d=\dim V_N-1\le r_N-1$. Define
\[\mathcal E_N := \{E_{v_\xi}:\xi\in\C^d\} = \{E_v:v\in V_N,\ v(0)=1\}.\]

We next estimate the shatter function of $\mathcal E_N$. Fix $t_1,\ldots,t_{2m_N}\in\C$ and define
\[q_i(\xi) := |v_\xi(t_i)|^2-e^{-2cN}, \qquad 1\le i\le2m_N.\]
After identifying $\C^d$ with $\R^{2d}$, each $q_i$ is a real polynomial
of degree at most two, and $t_i\in E_{v_\xi} \iff q_i(\xi)\le0$. For each fixed $\xi$, choose $s>0$ smaller than every positive number among $q_1(\xi),\ldots,q_{2m_N}(\xi)$; if none of these numbers is positive, choose any $s>0$. Then none of the numbers $q_i(\xi)-s$ vanishes and $q_i(\xi)\le0 \iff q_i(\xi)-s<0$.
Thus every trace of $\mathcal E_N$ on $\{t_1,\ldots,t_{2m_N}\}$ is encoded by a strict sign pattern of the $2m_N$ polynomials
\[(\xi,s)\mapsto q_i(\xi)-s,
    \qquad 1\le i\le2m_N,\]
which have degree at most two in the $2d+1$ real variables $(\operatorname{Re}\xi,\operatorname{Im}\xi,s)$.

Since $d\le r_N-1$, $r_N=o(N)$, and $m_N\ge\theta N$, we have $2m_N\ge2d+1$ for all sufficiently large $N$. Applying Lemma~\ref{lem:warren} to these $2m_N$ polynomials of degree at most two in $2d+1$ real variables gives
\begin{equation}\label{eq:S-estimate}
    S_{\mathcal E_N}(2m_N) \le \left(\frac{16e m_N}{2d+1}\right)^{2d+1}.
\end{equation}
This estimate also covers $d=0$, with $\C^0$ understood as a singleton.

The map $(\xi,t)\longmapsto |v_\xi(t)|$ is continuous on $\C^d\times\C$. Hence the incidence set
\[\{(\xi,t)\in\C^d\times\C:t\in E_{v_\xi}\}
    =\{(\xi,t):|v_\xi(t)|\le e^{-cN}\}\]
is closed. Therefore $\mathcal E_N$ satisfies the Borel-parametrization
hypothesis of Lemma~\ref{lem:vc-uniform}. Let
\[\nu_{m_N}:=\frac{1}{m_N}\sum_{i=1}^{m_N}\delta_{T_i}\]
be the empirical measure of $T_1,\ldots,T_{m_N}$. On the event in the
statement, there exists $E_v\in\mathcal E_N$ such that $\nu_{m_N}(E_v)=1$ and $\nu(E_v)\le\frac1{16}$. Consequently,
\[\left\{\exists v\in V_N:\ v(0)=1, |v(T_i)|\le e^{-cN}
     \text{ for every }1\le i\le m_N\right\}
 \subset
 \left\{\sup_{E\in\mathcal E_N}|\nu_{m_N}(E)-\nu(E)|>\frac{1}{2}\right\}.\]
Since $m_N\ge\theta N$, Lemma~\ref{lem:vc-uniform} applies with
$\eta=1/2$ for all sufficiently large $N$. Therefore
\[\Prob^*\!\left(\exists v\in V_N:\ v(0)=1, |v(T_i)|\le e^{-cN}
     \text{ for every }1\le i\le m_N\right)
 \le 4S_{\mathcal E_N}(2m_N)e^{-m_N/32}.\]

Finally, since $d\le r_N-1$, $r_N=o(N)$, and $m_N\ge\theta N$, we have $(2d+1)/m_N\to 0$. Because $x\log(16e/x)\to 0$ as $x\to 0$, by \eqref{eq:S-estimate},
\begin{align*}
    4S_{\mathcal E_N}(2m_N)e^{-m_N/32}
    =4\exp\left(\log S_{\mathcal E_N}(2m_N)-\frac{m_N}{32}\right) 
    &\le
    4\exp\left((2d+1)\log\left(\frac{16e}{(2d+1)/m_N}\right)-\frac{m_N}{32}\right)\\
    &=4\exp\left(-\frac{m_N}{32}+o(m_N)\right)\le e^{-m_N/64}
\end{align*}
for all sufficiently large $N$. This proves the assertion with
$c_1=1/64$.
\end{proof}

\subsection{Small derivative blocks have exponentially small area}

For the rest of this section, we will use the following assumption.
\begin{assumption}\label{assump}
    Let $X_1,\ldots, X_n$ be i.i.d. with law $\mu$. Let $k_n=o(n)$ be deterministic and assume there exists $a\in (0,1]$ and a compactly supported probability measure $\lambda$ satisfying $a\lambda\le \mu$ and $L_\lambda<\infty$.
\end{assumption} 

Fix a bounded disk $D$, $\varepsilon>0$, $0<\delta<1/2$, and $T>0$. Define the deterministic good-anchor set
\[G_T:=\left\{z\in D: \mu(B(z,e^{-T}))\le e^{-T},\quad \lambda(B(z,e^{-T}))\le\frac{1}{2}\right\}.\] Fubini gives
\[\int_D \mu(B(z,e^{-T}))\,d^2z=\int_\C|D\cap B(x,e^{-T})|\,d\mu(x)\le \pi e^{-2T},\]
and the identical estimate holds with $\lambda$ in place of $\mu$. Markov's inequality therefore gives
\begin{equation}\label{eq:bad-size-bound}
\begin{aligned}
|D\setminus G_T|
&\le \left|\{z\in D:\mu(B(z,e^{-T}))>e^{-T}\}\right|
+\left|\{z\in D:\lambda(B(z,e^{-T}))>\tfrac12\}\right|\\
&\le e^T\int_D\mu(B(z,e^{-T}))\,d^2z 
+ 2\int_D\lambda(B(z,e^{-T}))\,d^2z
\le \pi e^{-T}+2\pi e^{-2T}
\le 3\pi e^{-T}.
\end{aligned}
\end{equation}
Here and below, for a Lebesgue-measurable set $A\subset\C$, $|A|$ denotes its planar Lebesgue measure. 

We henceforth take $n$ sufficiently large that 
\[k_n+\lfloor \delta n\rfloor\le n.\] For $z$ that is not an atom of $\mu$, define the event
\[B_n(z):=\{P_n(z)\neq 0, \quad
|R_{n,r}(z)|\le e^{-\varepsilon n} \quad\text{for every }
k_n\le r\le k_n+\lfloor \delta n\rfloor\}.\]
We set $B_n(z)=\emptyset$ at atoms of $\mu$. This changes no planar-area integral because the set of atoms is countable; the additional exclusion $P_n(z)=0$ likewise removes only finitely many anchors on each sample path. The maps $z\mapsto \mu(B(z,e^{-T}))$ and $z\mapsto \lambda(B(z,e^{-T}))$ are Borel, and the map $(\omega,z)\mapsto \mathbf{1}_{G_T}(z)\mathbf{1}_{B_n(z)}(\omega)$ is jointly measurable. Apart from the fixed countable set of atoms, it is determined by finitely many continuous polynomial evaluations and inequalities. Thus the random areas below are measurable and Tonelli's theorem applies.

The proof of the next lemma reduces $B_n(z)$ to the postselection event in Lemma~\ref{lem:postselection}. Fix $z$ and retain $m=\lceil \theta n\rceil$ eligible roots $Y_1,\ldots, Y_m$ and condition on all mixture labels and all unretained roots. Conditional on this background, the retained roots remain i.i.d. We then cancel the factors associated with the unretained roots outside $B(z,e^{-T})$. The resulting polynomial contains only the retained-root factors and the factors associated with the unretained roots inside $B(z,e^{-T})$, and therefore vanishes at 
\[t=Y_i-z,\quad 1\le i\le m.\]
Multiplication by the inverse product, followed by truncation at the degree bound established below, is a linear map determined by the conditioned background. The image under this map of $\C[t]_{k_n-1}$ is therefore a background-determined linear space $V$ satisfying $\dim V\le k_n=o(n)$. The contribution from the terms of degree less than $k_n$ gives a polynomial $v\in V$ with $v(0)=1$. On $B_n(z)$, the estimates proved below show that the remaining contribution is exponentially small at the points $Y_i-z$. Since the full polynomial vanishes at these points, it follows that 
\[|v(Y_i-z)|\le e^{-cn}, \quad 1\le i\le m,\]
for some $c>0$. Although $V$ is fixed after conditioning on the background, $v$ may depend on the retained roots. This is precisely the postselection allowed by Lemma~\ref{lem:postselection}.

\begin{lemma}\label{lem:summable}
    Under Assumption~\ref{assump}, for each fixed bounded disk $D$, $\varepsilon>0$, and $0<\delta<1/2$, there exists $T_0$ such that for every fixed $T\ge T_0$, there is $c_T>0$ for which 
    \[\E|\{z\in G_T:B_n(z)\}|\le |D|e^{-c_T n}\]
    for all sufficiently large $n$. In particular, these expectations are summable in $n$.
\end{lemma}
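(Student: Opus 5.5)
The plan is to reduce the area bound to a uniform pointwise probability bound and then to Lemma~\ref{lem:postselection}. By Tonelli (measurability was arranged above), $\E|\{z\in G_T:B_n(z)\}|=\int_{G_T}\Prob(B_n(z))\,d^2z$. It therefore suffices to show $\Prob(B_n(z))\le e^{-c_Tn}$ uniformly in $z\in G_T$ for large $n$.

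\emph{Random setup.} Fix $z\in G_T$ and write $\mu=a\lambda+(1-a)\mu'$. Realize each $X_j$ by an independent label: with probability $a$ it is drawn from $\lambda$, otherwise from $\mu'$. Call $X_j$ \emph{eligible} if its label is $\lambda$ and $X_j\notin B(z,e^{-T})$. Since $\lambda(B(z,e^{-T}))\le 1/2$, each index is eligible with probability at least $a/2$. Since $\mu(B(z,e^{-T}))\le e^{-T}$, each root lies in the ball with probability at most $e^{-T}$. Chernoff bounds give, outside an event of probability $e^{-c n}$, at least $an/4$ eligible roots and at most $s:=\lfloor 2e^{-T}n\rfloor$ roots in $B(z,e^{-T})$.

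On that event, retain $m=\lceil\theta n\rceil$ eligible roots $Y_1,\dots,Y_m$, chosen by a rule depending only on labels and indices. Condition on all labels and all unretained roots. The $Y_i$ are then i.i.d. with law $\nu:=\lambda|_{B(z,e^{-T})^c}/\lambda(B(z,e^{-T})^c)$. This $\nu$ is supported on the compact set $\operatorname{supp}\lambda$, and $L_\nu\le 2L_\lambda$ because the normalizing mass is at least $1/2$. So $K=\operatorname{supp}\lambda$ and $L=2L_\lambda$ serve for Lemma~\ref{lem:postselection}, with $N=n$.

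\emph{Algebra.} Put $Q(w)=P_n(z+w)=\sum_r a_rw^r$, where
\[
a_r=\frac{P_n^{(r)}(z)}{r!}=a_0\binom nr R_{n,r}(z),\qquad a_0=P_n(z)\neq0.
\]
Factor $Q=A\cdot Y\cdot I$. Here $A$ collects the unretained roots outside the ball, so its roots $b$ satisfy $|b|\ge e^{-T}$; $Y$ collects the retained roots; $I$ collects the roots in the ball. Then $H:=Q/A=YI$ is a polynomial of degree $M:=m+s$, and it vanishes at $t_i=Y_i-z$. Let $c_j$ be the Taylor coefficients of $A(0)/A(w)=\prod(1-w/b)^{-1}$. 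These satisfy $|c_j|\le\binom{n+j}{j}e^{Tj}$.

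Let $\Phi$ be the background-determined linear map: multiply by $A(0)/(a_0A(w))$ and truncate at degree $M$. Then $H\cdot A(0)/a_0=\Phi(Q_{<k_n})+\Phi(Q_{\ge k_n})$. Choose $\theta$ and $T$ so that $M\le\delta n$. Then only the coefficients $a_r$ with $k_n\le r\le k_n+\delta n$ enter $\Phi(Q_{\ge k_n})$, and on $B_n(z)$ each satisfies $|a_r/a_0|\le\binom nr e^{-\varepsilon n}$.

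Set $v:=\Phi(Q_{<k_n})\in V:=\Phi(\C[t]_{<k_n})$. This gives $\dim V\le k_n=o(n)$, $\deg v\le M$, and $v(0)=c_0=1$. Since $H(t_i)=0$, we get $|v(t_i)|=|\Phi(Q_{\ge k_n})(t_i)|$. Use $|t_i|\le C_K$, the bound $\binom{n}{r}\le e^{o(n)+M\log(en/M)}$ for $r\le k_n+M$, and the bound on $c_j$. This yields
\[
|v(t_i)|\le \exp\bigl(-\varepsilon n+o(n)+O\bigl(M\log(en/M)+TM+M\log C_K\bigr)\bigr).
\]

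\emph{Choice of parameters; the main obstacle.} The hard step is making the exponential losses, which are of order $M(T+\log(n/M))$, smaller than $\varepsilon n/2$ while keeping $M\le\rho n$. Here $\rho=\rho(\varepsilon/2,K,L)$ is fixed in advance by Lemma~\ref{lem:postselection}. I would choose $T_0$ so that $2e^{-T}(T+\log(e^{T}))$ is small relative to $\varepsilon$, and so that $2e^{-T}\le\rho/2$ and $2e^{-T}\le\delta/2$. Then, for each $T\ge T_0$, I would choose $\theta=\theta(T)$ so small that $\theta\le\rho/2$, $\theta\le\delta/2$ and $\theta(T+\log(e/\theta))\ll\varepsilon$.

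With these choices, $|v(t_i)|\le e^{-\varepsilon n/2}$ for all $i$. Lemma~\ref{lem:postselection} with $c=\varepsilon/2$ then bounds the conditional probability by $e^{-\theta n/64}$. Averaging over the background and adding the Chernoff failure probability gives $\Prob(B_n(z))\le e^{-c_Tn}$ uniformly on $G_T$. Integrating over $D$ gives the claimed bound $|D|e^{-c_Tn}$, which is summable in $n$.
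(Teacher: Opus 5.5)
Your proposal is correct and is essentially the paper's proof. It uses the same eligibility/mixture coupling with two Chernoff bounds, and the same cancellation of the unretained far roots: your $A(0)/A$ is the paper's $Q$, and $Q_{<k_n}/a_0$ is the paper's low-order part of $F$. It also uses the same background-determined space $V$ of dimension at most $k_n$ (the scalar $1/a_0$ in $\Phi$ does not affect $V$), and the same application of Lemma~\ref{lem:postselection} with $c=\varepsilon/2$ and potential bound $2L_\lambda$.

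There are a few small things to fix:
\begin{itemize}
\item The sample points fed to that lemma are $t_i=Y_i-z$, and the normalization is $v(0)=1$. So the compact set must contain $\operatorname{supp}\lambda-z$ for every $z\in\overline{D}$, for example the paper's $\overline{B(0,L)}$, not just $\operatorname{supp}\lambda$. This keeps $\rho$, and hence $T_0$ and $\theta$, independent of $z$.
\item You should add $\theta<a/4$ to your constraints, so that $m$ eligible roots exist on the good event.
\item You should note that $B_n(z)=\emptyset$ when $k_n=0$.
\end{itemize}
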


\begin{proof}
    Fix $z\in G_T$. There is nothing to prove when $z$ is an atom of $\mu$, otherwise $P_n(z)\neq 0$ almost surely. Also if $k_n=0$, then $B_n(z)=\emptyset$ because $R_{n,0}=1$. We therefore assume $z$ is non-atom and $k_n\ge 1$. Since $z\in G_T$,
    \[\lambda(\C\setminus B(z,e^{-T}))\ge\frac{1}{2},
    \qquad \mu(B(z,e^{-T}))\le e^{-T}.\]

    Set 
    \[S_z:=\C\setminus B(z,e^{-T}),\qquad p_z:=a\lambda(S_z).\]
    Since $a\lambda\le \mu$, we have $a\lambda\mid_{S_z}\le \mu$. Moreover, because $z\in G_T$, 
    \[\lambda(S_z)=1-\lambda(B(z,e^{-T}))\ge \frac{1}{2},\]
    and hence $p_z\ge a/2$. Define the probability measure 
    \[\lambda_z^{\mathrm{out}}:=\frac{\lambda\mid_{S_z}}{\lambda(S_z)}.\]
    If $p_z<1$, define 
    \[\rho_z:=\frac{\mu-a\lambda\mid_{S_z}}{1-p_z}.\]
    If $p_z=1$, let $\rho_z$ be any probability measure. In either case,
    \[\mu=p_z\lambda_z^{\mathrm{out}}+(1-p_z)\rho_z.\]

    We may therefore generate the roots independently as follows. For each $j$, let $\eta_j$ be Bernoulli with $\Prob(\eta_j=1)=p_z$. Conditional on $\eta_j=1$, sample $X_j$ with law $\lambda_z^{\mathrm{out}}$; conditional on $\eta_j=0$, sample $X_j$ with law $\rho_z$. The resulting variables $X_1,\ldots, X_n$ are still i.i.d. with law $\mu$. We call $X_j$ eligible when $\eta_j=1$. Thus the eligibility labels are independent, each root is eligible with probability $p_z\ge a/2$. Condition on all the labels, the eligible roots are independent with common law $\lambda_z^{\mathrm{out}}$.
    
    Let $m=\lceil \theta n\rceil$ where $\theta>0$ will be chosen later. Since the number of eligible roots has distribution $\operatorname{Bin}(n,p_z)$ with $p_z\ge a/2$, if $\theta<a/4$, Chernoff bound gives 
    \[\Prob(\text{there are fewer than }m\text{ eligible roots})=\Prob(\#\{1\le j\le n:\eta_j=1\}<m)\le e^{-an/16}.\]
    Since also $\mu(B(z,e^{-T}))\le e^{-T}$, Chernoff bound again gives that except on an event of probability at most $e^{-e^{-T}n/3}$ there are at most $2e^{-T}n$ roots in $B(z,e^{-T})$, i.e.
    \[\Prob(\#\{j:X_j\in B(z,e^{-T})\}>2e^{-T}n)\le e^{-e^{-T}n/3}.\]
    Call the roots in this ball the \emph{near roots}.
    
    On the intersection of these two good events, retain the first $m$ eligible roots and condition on all mixture labels and all unretained roots. Since every retained root is eligible, it lies outside $B(z,e^{-T})$; therefore the near-root count is determined entirely by the unretained roots and is part of the conditioned background. Thus, conditionally on this background, the retained roots $Y_1,\ldots, Y_m$ remain i.i.d. with law $\lambda_z^{\mathrm{out}}$. Therefore the translates $Y_i-z$ are conditionally i.i.d. with the corresponding translated law $\lambda_z^{\mathrm{out}}$. Define
    \[L:=\max\left\{1,\sup_{x\in \operatorname{supp} \lambda,z\in \overline{D}}|x-z|\right\}.\]
    Their law is supported on $\overline{B(0,L)}$ and satisfies, uniformly in $z\in D$,
    \[\sup_{w\in\C}\frac{1}{\lambda(S_z)}\int_{\C\setminus B(z,e^{-T})}\log^-|x-z-w|\,d\lambda(x)
    \le 2\sup_{u\in\C}\int_\C\log^-|x-u|\,d\lambda(x) 
    \le 2L_\lambda.\]
    Indeed, translation only changes the point in the supremum, restriction can only decrease the integral.

    Put
    \[F(t):=\prod_{j=1}^n\left(1+\frac{t}{z-X_j}\right)=\sum_{r=0}^n \binom{n}{r}R_{n,r}(z)t^r.\]
    Therefore on $B_n(z)$, the coefficient of $t^r$ satisfies $|\binom{n}{r}R_{n,r}|\le \binom{n}{r}e^{-\varepsilon n}$ for $k_n\le r\le k_n+\lfloor \delta n\rfloor$.
    We now invert only the factors corresponding to unretained roots outside $B(z,e^{-T})$. Define 
    \[Q(t):=\prod_{\substack{j\text{ unretained}\\
                     X_j\notin B(z,e^{-T})}}
    \left(1+\frac{t}{z-X_j}\right)^{-1}=\sum_{\ell\ge 0}b_\ell t^\ell.\]
    Each inverted factor has constant term $1$, so its formal inverse is well-defined. The series also converges absolutely for $|t|<e^{-T}$, but below we use only its coefficients and finite truncations. Because $Q$ depends only on the unretained roots, it is fixed once the background is conditioned on.

    Cancellation in the formal product give
    \[Q(t)F(t)=
    \prod_{i=1}^{m}\left(1+\frac{t}{z-Y_i}\right)\prod_{\substack{j\ \mathrm{unretained}\\X_j\in B(z,e^{-T})}}\left(1+\frac{t}{z-X_j}\right).\]
    Thus $QF$ is a polynomial. On the good counting event,
    \[\deg(QF)\leq M:=m+\lceil 2e^{-T}n\rceil.\]
    In particular, evaluating this polynomial gives
    \[(QF)(Y_i-z)=0,\qquad 1\leq i\leq m.\]
    These evaluations do not require convergence of the power series for $Q$ at $Y_i-z$.
    
    Every factor inverted in $Q$ corresponds to a root outside $B(z,e^{-T})$ and therefore satisfies $|\frac{1}{z-X_j}| \le e^T$. Expanding the reciprocal factors as geometric series gives
    \[|b_\ell| \le \binom{n+\ell-1}{\ell}e^{T\ell}, \qquad \ell\ge0.\]
    
    Write $(\cdot)_{\le M}$ for truncation through degree $M$, and define the background-determined linear space
    \[V:=\left\{(Qh)_{\le M}:h\in\C[t]_{\le k_n-1}\right\}
    \subset \C[t]_{\le M},\qquad\dim V\le k_n=o(n).\]
    Define 
    \[v:=\left(Q\sum_{s=0}^{k_n-1}\binom{n}{s}R_{n,s}(z)t^s\right)_{\le M}.\]
    Then $v\in V$ and $v(0)=1$.
    Although $V$ is fixed by the conditioned background, the particular polynomial $v$ may depend on the retained roots. This is precisely the postselection allowed by Lemma~\ref{lem:postselection}.
    
    Since $QF$ has degree at most $M$ on the good event, the definition of $v$ gives
    \[(QF-v)(t)
    =\left(Q(t)\sum_{s=k_n}^{n}\binom nsR_{n,s}(z)t^s\right)_{\le M} 
    =\sum_{r=k_n}^{M}\left(\sum_{s=k_n}^{r}b_{r-s}\binom nsR_{n,s}(z)\right)t^r.\]
    Suppose that $M\le k_n+\lfloor\delta n\rfloor$, as will follow from the parameter choice below. On $B_n(z)$, for every $k_n\le r\le M$, which is nonempty since $M\ge m\ge\theta n$ and $k_n=o(n)$,
    \[\left|\sum_{s=k_n}^{r}b_{r-s}\binom nsR_{n,s}(z)\right|\le(M+1)e^{-\varepsilon n}\max_{\substack{k_n\le s\le M\\0\le\ell\le M}}\binom ns\binom{n+\ell-1}{\ell}e^{T\ell}.\]
    Since both $QF$ and $v$ have degree at most $M$ and $(QF)(Y_i-z)=0$, on the intersection of $B_n(z)$ with the good event,
    \[|v(Y_i-z)|\le(M+1)^2L^M e^{-\varepsilon n}\max_{\substack{k_n\le s\le M\\0\le\ell\le M}}\binom ns\binom{n+\ell-1}{\ell}e^{T\ell},
    \qquad 1\le i\le m.\]
    Set $\alpha:=\theta+3e^{-T}$. For all sufficiently large $n$, $M/n\le\alpha$. If $\alpha<1/2$, the elementary binomial estimate $\binom Nr\le\left(\frac{eN}{r}\right)^r$ gives, for $0\le s,\ell\le M$,
    \[\binom ns\binom{n+\ell-1}{\ell}\le\exp\left\{Cn\alpha\log\frac e\alpha\right\}\]
    for some absolute constant $C>0$. And since $L^Me^{T\ell}\le\exp\{n\alpha(T+\log L)\}$, on $B_n(z)$ and the good event,
    \[|v(Y_i-z)|\le\exp\left(-n\left(\varepsilon-C\alpha\log\frac{e}{\alpha}-(T+\log L)\alpha-o(1)\right)\right),
    \qquad 1\le i\le m.\]
    Here the $o(1)$ absorbs the factor $(M+1)^2$ for all $z$.
    
    Let $\rho_*>0$ be supplied by Lemma~\ref{lem:postselection} with $c=\varepsilon/2$, compact support $\overline{B(0,L)}$, and potential bound $2L_\lambda$. Since $e^{-T}\log(e/e^{-T})\to 0$ and $Te^{-T}\to 0$, we may choose $T_0$ sufficiently large that, for every $T\ge T_0$,
    \[3e^{-T}<\min\{\rho_*,\delta/2,1/2\}
    \qquad\text{and}\qquad
    C(3e^{-T})\log\frac{e}{3e^{-T}}+(T+\log L)3e^{-T}<\frac{\varepsilon}{4}.\]

    Fix $T\ge T_0$. Since $x\log(e/x)\to 0$ as $x\to 0$ and $\alpha=\theta+3e^{-T}\to3e^{-T}$ as $\theta\to 0$, we may choose $\theta>0$ sufficiently small that
    \[\theta<\frac a4,
    \qquad
    \alpha<\min\{\rho_*,\delta/2,1/2\},
    \qquad
    C\alpha\log\frac e\alpha+(T+\log L)\alpha<\frac{\varepsilon}{3}.\]
    For all sufficiently large $n$, these choices imply $M\le\rho_*n$, $M\le k_n+\lfloor\delta n\rfloor$. They also give
    \[|v(Y_i-z)|\le e^{-\varepsilon n/2}, \qquad 1\le i\le m\] on $B_n(z)$ and the good event.
    
    Now fix any background satisfying the two good conditions. Under this conditioning, $V$ is fixed and $Y_1-z,\ldots,Y_m-z$ are i.i.d. with the translated law described above. Moreover, $V\subset\C[t]_{\le M}$, $\dim V\le k_n=o(n)$, $M\le\rho_*n$, $m\ge\theta n$. On the measurable section of $B_n(z)$, the preceding construction supplies a polynomial $v\in V$ satisfying $v(0)=1$ and $|v(Y_i-z)|\le e^{-\varepsilon n/2}$ for every $1\le i\le m$. 

    Therefore this measurable section is contained in the existential event of Lemma~\ref{lem:postselection}. Apply that lemma under the conditional product law of $Y_1-z,\ldots,Y_m-z$. Under this law, the probability of the measurable section is at most the outer probability of the containing event, and hence at most $e^{-c_1m}$. The estimate and the large-$n$ threshold are uniform over the conditioned backgrounds and $z\in G^T$. Since the two good conditions are determined by the background, averaging these ordinary conditional probabilities and adding the two Chernoff exceptions gives
    \[\Prob(B_n(z))
    \le e^{-c_1m}+e^{-an/16}+e^{-e^{-T}n/3}
    \le e^{-c_Tn}\]
    for some $c_T>0$ and all sufficiently large $n$. Finally Tonelli's theorem gives
    \[\E\bigl|\{z\in G_T:B_n(z)\}\bigr|=\int_{G_T}\Prob(B_n(z))\,d^2z \le|D|e^{-c_Tn}\]
    which completes the proof.
\end{proof}

\subsection{A potential gap forces a derivative block}
We now prove the deterministic implication that a nonzero limiting gap between the normalized logarithmic potentials of $P_n$ and $P^{(k_n)}_n$ forces exponentially small normalized derivatives for every order from $k_n$ through $k_n+\lfloor \delta n\rfloor$. This Cauchy-estimate argument is the sole step that produces a whole consecutive block of derivative orders.

\begin{lemma}\label{lem:spatial-gap-block}
    Let $P_n$ have degree $n$ and $k_n=o(n)$. Put
    \[\varphi_n(z):=\frac{1}{n}\log\frac{|P_n(z)|}{\|P_n\|_{\mathrm{c},n}},\qquad \psi_n(z):=\frac{1}{n}\log\frac{|P^{(k_n)}_n(z)|}{(n)_{k_n}\|P_n\|_{\mathrm{c},n}}.\]
    Suppose along a subsequence $\varphi_n\to\varphi$ and $\psi_n\to\psi$ in $L_{\rm loc}^1(\C)$, where $\psi\le \varphi$ almost everywhere and $\psi\neq \varphi$. Then there exists a bounded disk $D$, constants $\varepsilon$, $\delta>0$, a measurable set $E\subset D$ of positive area, and sets $E_n\subset E$ with $|E_n|\to |E|$ such that for all sufficiently large $n$,
    \[
    \left|\frac{P_n^{(k_n+j)}(z)}
    {(n)_{k_n+j}P_n(z)}\right|\le e^{-\varepsilon n},
    \qquad z\in E_n,\quad 0\le j\le\lfloor\delta n\rfloor.
    \]
\end{lemma}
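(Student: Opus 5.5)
The approach is to convert the potential gap $\varphi-\psi>0$ into a bound on the Taylor coefficients of $P_n^{(k_n)}$ around a point $z$, using Cauchy's estimates on a small circle. Write $k=k_n$. For $z$ with $P_n(z)\neq0$ and $r>0$, Cauchy's estimate applied to $w\mapsto P_n^{(k)}(z+w)$ gives
\[
\frac{|P_n^{(k+j)}(z)|}{j!}\le r^{-j}\max_{|w|=r}|P_n^{(k)}(z+w)|.
\]
Since $(n)_{k+j}=(n)_k(n-k)_j$ and $j!\le (n-k)_j$ for $j\le n-k$, dividing by $(n)_{k+j}|P_n(z)|$ and passing to normalized logarithms yields
\[
\frac1n\log\left|\frac{P_n^{(k+j)}(z)}{(n)_{k+j}P_n(z)}\right|
\le \max_{|w|=r}\psi_n(z+w)-\varphi_n(z)+\frac{j}{n}\log^+\frac1r .
\]
So it suffices to find a set of positive area on which $\varphi_n(z)$ is uniformly larger than $\sup_{\overline{B(z,r)}}\psi_n$ by a fixed margin. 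The last term is then at most $\delta\log^+(1/r)$, which can be absorbed by taking $\delta$ small.

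\emph{Choice of $D$ and $E$.} Use the subharmonic representatives of $\varphi$ and $\psi$ from Lemma~\ref{lem:polynomial-compactness}. Since $\psi\le\varphi$ a.e. and $\psi\ne\varphi$ in $L^1_{\rm loc}$, there is $\eta>0$ such that $\{\varphi-\psi>4\eta\}$ has positive area. Writing this set as a countable union over rational $q$, some
\[
A_q:=\{\varphi>q+2\eta\}\cap\{\psi<q\}
\]
has positive area. Pick a Lebesgue density point $z_0\in A_q$. Since $\psi(z_0)<q$ and $\psi$ is upper semicontinuous, there is $r>0$ with $\psi<q$ on $\overline{B(z_0,3r)}$. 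Set $D:=B(z_0,r)$ and $E:=A_q\cap D$; then $|E|>0$.

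\emph{Uniform upper bound for $\psi_n$ (Hartogs step).} By \eqref{eq:estimate} and \eqref{eq:contraction},
\[
\psi_n(z)\le\frac{\log(n+1)}{2n}+\log^+|z|,
\]
so the subharmonic functions $\psi_n$ are locally uniformly bounded above, and they converge to $\psi$ in $L^1_{\rm loc}$. Hartogs' lemma for subharmonic functions then gives, on the compact set $K=\overline{B(z_0,2r)}$,
\[
\limsup_n\sup_K\psi_n\le \sup_K\psi\le q .
\]
Hence $\sup_K\psi_n\le q+\eta/2$ for large $n$. Every circle $\{|w-z|=r\}$ with $z\in D$ lies in $K$. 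This step is where I expect the main care to be needed: pointwise upper semicontinuity of $\psi$ is not enough. The reason for choosing the single ball $B(z_0,3r)$ through the rational level $q$ is precisely so that Hartogs' lemma can be applied on one fixed compact set, which makes the bound uniform in $z\in E$. If a clean citation of Hartogs' lemma is unavailable, I would derive it from the submean inequality: $\psi_n(x)\le\frac{1}{\pi s^2}\int_{B(x,s)}\psi_n$, followed by $L^1$ convergence and the fact that $\psi<q$ on the slightly larger ball.

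\emph{Lower bound for $\varphi_n$ and conclusion.} Pass to a further subsequence along which $\varphi_n\to\varphi$ a.e.; the lemma's conclusion only concerns a subsequence. By Egorov's theorem on $E$, there are $E_n\subset E$ with $|E_n|\to|E|$ such that, for large $n$, $\varphi_n\ge\varphi-\eta/2>q+3\eta/2$ on $E_n$. Remove from $E_n$ the finitely many zeros of $P_n$, which does not change its area. Then for $z\in E_n$ and $0\le j\le\lfloor\delta n\rfloor$, the displayed inequality gives
\[
\frac1n\log|R_{n,k+j}(z)|\le (q+\eta/2)-(q+3\eta/2)+\delta\log^+\frac1r=-\eta+\delta\log^+\frac1r\le-\frac\eta2
\]
once $\delta\le \eta/(2\log^+(1/r)+1)$. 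This yields the lemma with $\varepsilon=\eta/2$.
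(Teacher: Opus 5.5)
Your proposal is correct and follows essentially the same route as the paper. You split the gap through a rational level, localize with upper semicontinuity of $\psi$, bound $\psi_n$ uniformly on a fixed compact set by the submean inequality (your Hartogs step and its submean fallback are exactly the paper's estimate), keep $\varphi_n$ large off a small exceptional set, and apply Cauchy's estimate to $P_n^{(k_n)}$ with $\delta$ absorbing $r^{-j}$. Choosing a single ball around a Lebesgue density point is a slightly cleaner way to get a uniform radius than the paper's refinement over radii $1/j$ plus inner regularity.

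Two small touch-ups:
\begin{itemize}
\item You should not pass to a further subsequence, because the statement asserts the bound along the given subsequence. This costs nothing: $L^1_{\rm loc}$ convergence already gives convergence in measure on $E$, so $E_n:=\{z\in E:\varphi_n\ge\varphi-\eta/2\}\setminus\{P_n=0\}$ satisfies $|E_n|\to|E|$ directly.
\item You should also impose $\delta<1/2$. Then $k_n+\lfloor\delta n\rfloor\le n$ for large $n$, so the ratios are defined and $j!\le(n-k_n)_j$ applies.
\end{itemize}
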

\begin{proof}
    Since $P_n$ and $\mathcal D_{n,k_n}P_n$ are nonzero polynomials, $\varphi_n$ and $\psi_n$ are subharmonic. Moreover, first part of \eqref{eq:estimate} and \eqref{eq:contraction} give
    \[\varphi_n(z),\ \psi_n(z)
    \le \frac{\log(n+1)}{2n}+\log^+|z|,\]
    so both sequences are locally uniformly bounded above. Since $\varphi_n\to\varphi$ and $\psi_n\to\psi$ in $L^1_{\rm loc}(\C)$, their limits have subharmonic representatives.

    Since $\psi\le \varphi$ a.e. and $\psi\ne \varphi$, $\varphi-\psi$ is positive on a set of positive area. So we choose a $\gamma>0$ such that the set 
    \[H_\gamma:=\{\varphi-\psi>8\gamma,\, \varphi>-\infty,\,\psi>-\infty\}\]
    has positive area. The countable covering $H_\gamma\subset \bigcup_{t\in \mathbb Q}\{\psi<t,\,\varphi>t+6\gamma\}$ shows that $\psi<t,\varphi>t+6\gamma$ hold on a bounded set of positive area. Indeed, for every $z\in H_\gamma$, the interval $(\psi(z),\varphi(z)-6\gamma)$ is nonempty and therefore contains some $t\in\mathbb Q$. Since the resulting union is countable and $|H_\gamma|>0$, one of its members has positive area. Intersecting that member with a sufficiently large disk produces a bounded set of positive area on which both inequalities hold. For every point of this set, upper semicontinuity of $\psi$ gives a positive radius on which $\psi\le t+\gamma$. Refining over the countable radii $r=1/j$ and then using inner regularity gives a compact positive-area set $E$ and a common $r>0$ such that $\varphi>t+6\gamma$ on $E$. After enlarging a disk $D$, all these balls lie in $D$.

    Let
    \[U:=\bigcup_{z\in E}B(z,2r),\qquad K:=\bigcup_{z\in E}\overline{B(z,r)}.\]
    Then $U$ is open, $K$ is compact, and $\psi\le t+\gamma$ on $U$. Moreover, $B(w,r/2)\subset U$ for every $w\in K$. Indeed, if $w\in K$, then $|w-\zeta|\le r$ for some $\zeta\in E$. Therefore, if $|\xi-w|<r/2$, then $|\xi-\zeta|\le|\xi-w|+|w-\zeta|<\frac r2+r<2$r, so $\xi\in U$. Since $E$ is compact, $\overline{U}$ is compact. Hence $\psi_n\to \psi$ in $L_{\rm loc}^1(\C)$ implies $\int_U |\psi_n-\psi|\,d^2z\to 0$. By the mean value inequality,
    \[\psi_n(z)\le\frac{4}{\pi r^2}\int_{B(z,r/2)}\psi_n\,d^2z
    \le \frac{4}{\pi r^2}\int_{B(z,r/2)}\psi\,d^2z+\frac{4}{\pi r^2}\int_{B(z,r/2)}|\psi_n-\psi|\,d^2z
    \le t+\gamma+\frac{4}{\pi r^2}\int_{U}|\psi_n-\psi|\,d^2z.\]
    Therefore $\psi_n(z)\le t+2\gamma$ for sufficiently large $n$.
    Since $\varphi_n\to\varphi$ in measure on $E$, the set
    \[E_n:=\{z\in E:\varphi_n(z)\ge t+5\gamma\}\setminus\{P_n=0\}\] satisfies $|E_n|\to |E|$. Indeed, since $\varphi>t+6\gamma$ on $E$, we have 
    \[E\setminus\{z\in E:\varphi_n(z)\ge t+5\gamma\}\subset \{z\in E:|\varphi_n(z)-\varphi(z)|>\gamma\}.\]
    The area of the set on the right tends to zero. Moreover $\{P_n=0\}$ is finite and hence has area zero. 

    For $z\in E_n$, we have $\overline{B(z,r)}\subset K$. Since $\psi_n\le t+2\gamma$ on $K$,
    \[\sup_{w\in\overline{B(z,r)}}|\mathcal D_{n,k_n}P_n(w)|
    \le e^{n(t+2\gamma)}\|P_n\|_{\mathrm{c},n}.\]
    The definition of $E_n$ gives
    \[|P_n(z)|\ge e^{n(t+5\gamma)}\|P_n\|_{\mathrm{c},n}.\]
    Applying Cauchy's derivative estimate to $\mathcal D_{n,k_n}P_n$ therefore gives
    \[\left|\frac{P_n^{(k_n+j)}(z)}{(n)_{k_n+j}P_n(z)}\right|
    = \frac{|(\mathcal D_{n,k_n}P_n)^{(j)}(z)|}{(n-k_n)_j|P_n(z)|}
    \le \frac{j!}{(n-k_n)_j}r^{-j}\frac{e^{n(t+2\gamma)}\|P_n\|_{\mathrm{c},n}}{e^{n(t+5\gamma)}\|P_n\|_{\mathrm{c},n}}
    = \frac{j!}{(n-k_n)_j}r^{-j}e^{-3\gamma n}
    \le r^{-j}e^{-3\gamma n}\]
    for $0\le j\le n-k_n$. Here the first equality uses $(n)_{k_n+j}=(n)_{k_n}(n-k_n)_j$, and the last inequality follows from \eqref{eq:bin-estimate}.
    
    Choose $0<\delta<1/2$ so small that $\delta\log^+(1/r)<\gamma$. Since $k_n=o(n)$, for all sufficiently large $n$ we have $k_n+\lfloor\delta n\rfloor\le n$. Moreover, for $0\le j\le\lfloor\delta n\rfloor$,
    \[r^{-j}\le\exp\left(n\delta\log^+(1/r)\right)\le e^{\gamma n}.\]
    Plugging it back into the Cauchy estimate and taking $\varepsilon=2\gamma$ complete the proof.
\end{proof}

\begin{corollary}\label{cor:no-affine-gap}
    Under Assumption~\ref{assump}, almost surely the following holds: whenever along a subsequence $\varphi_n$, $\psi_n$ of Lemma~\ref{lem:spatial-gap-block} converge to $\varphi$, $\psi$ in $L^1_{\rm loc}(\mathbb C)$, respectively, and $\psi\le \varphi$ almost everywhere, one must have $\psi=\varphi$ almost everywhere.
\end{corollary}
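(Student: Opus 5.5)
We argue by combining Lemma~\ref{lem:spatial-gap-block} with Lemma~\ref{lem:summable} through a Borel--Cantelli argument. The key point is that the almost-sure event must be fixed \emph{before} the subsequence and the limits are chosen. So we first build a countable family of deterministic parameters, then define a single full-probability event on which no exponentially small derivative block of positive area survives.

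\textbf{Step 1: the countable parameter family.} Enumerate all disks $D$ with rational centers and rational radii, all rational $\varepsilon>0$, and all rational $\delta\in(0,1/2)$. For each such triple $(D,\varepsilon,\delta)$, let $T_0(D,\varepsilon,\delta)$ be given by Lemma~\ref{lem:summable}, and consider the integers $T\ge T_0$.

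\textbf{Step 2: the full-probability event.} For each quadruple $(D,\varepsilon,\delta,T)$, Lemma~\ref{lem:summable} gives $\E|\{z\in G_T:B_n(z)\}|\le|D|e^{-c_Tn}$. By Markov's inequality,
\[\Prob\bigl(|\{z\in G_T:B_n(z)\}|>e^{-c_Tn/2}\bigr)\le |D|e^{-c_Tn/2},\]
which is summable in $n$. Borel--Cantelli then shows that, almost surely, $|\{z\in G_T:B_n(z)\}|\to0$. Let $\Omega_0$ be the intersection of these events over the countable family. Then $\Omega_0$ has full probability, and $\Omega_0$ does not depend on any subsequence.

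\textbf{Step 3: contradiction on $\Omega_0$.} Fix a sample in $\Omega_0$, and suppose that along some subsequence $\varphi_n\to\varphi$ and $\psi_n\to\psi$ with $\psi\le\varphi$ a.e. but $\psi\neq\varphi$. Lemma~\ref{lem:spatial-gap-block} produces $D,\varepsilon,\delta,E,E_n$ with $|E_n|\to|E|>0$, such that for large $n$ along the subsequence, $|R_{n,k_n+j}(z)|\le e^{-\varepsilon n}$ for $z\in E_n$ and $0\le j\le\lfloor\delta n\rfloor$. Note that $R_{n,k_n+j}=P_n^{(k_n+j)}/((n)_{k_n+j}P_n)$ is exactly the quantity controlled by the lemma.

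We then replace these parameters by members of the countable family. Enlarge $D$ to a rational disk containing it, and shrink $\varepsilon$ and $\delta$ to rationals. Both changes preserve the block estimate: a smaller $\varepsilon$ gives a weaker bound, and a smaller $\delta$ gives a shorter block. Points of $E_n$ avoid $\{P_n=0\}$ by construction, and atoms of $\mu$ form a countable, hence null, set. It follows that $E_n\setminus\{\text{atoms}\}\subset\{z\in D:B_n(z)\}$.

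By \eqref{eq:bad-size-bound}, $|D\setminus G_T|\le3\pi e^{-T}$. Choose $T\ge T_0$ so large that $3\pi e^{-T}<|E|/2$. Then for large $n$,
\[|\{z\in G_T:B_n(z)\}|\ge|E_n|-3\pi e^{-T}\ge|E|/4>0,\]
along the subsequence. This contradicts Step 2, so $\psi=\varphi$ almost everywhere.

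\textbf{Main obstacle.} The delicate point is the quantifier order. Lemma~\ref{lem:spatial-gap-block} produces $D,\varepsilon,\delta$ depending on the sample path and the subsequence, while the probabilistic bound of Lemma~\ref{lem:summable} must be applied to deterministic parameters. The countable rational family, together with the monotonicity of the block event in $D$, $\varepsilon$ and $\delta$, resolves this. We must also check that the anchor exclusions (atoms and zeros of $P_n$) and the bad set $D\setminus G_T$ each cost at most a small area.
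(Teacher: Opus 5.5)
Your proposal is correct and follows the paper's proof essentially step for step. It uses the same countable rational family of tuples $(D,\varepsilon,\delta,T)$, the same weakening to a rational disk and smaller rational $\varepsilon,\delta$, and the same area bound $|E_n\cap G_T|\ge |E_n|-|D'\setminus G_T|$. The only cosmetic difference is how you get $|\{z\in G_T:B_n(z)\}|\to 0$ almost surely: you use Markov's inequality plus Borel--Cantelli, whereas the paper applies Tonelli to the summable expectations to get $\sum_n|\{z\in G_T:B_n(z)\}|<\infty$ almost surely directly.
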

\begin{proof}
    Fix a disk with rational center and radius, rational numbers $\varepsilon>0$ and $0<\delta<1/2$, and an integer $T\ge T_0$ where $T_0=T_0(D,\varepsilon,\delta)$ is supplied by Lemma~\ref{lem:summable}. Put
    \[H_n(D,\varepsilon,\delta,T):=\{z\in G_T:B_n(z)\}.\] Lemma~\ref{lem:summable} gives
    \[\sum_{n}\E|H_n(D,\varepsilon, \delta, T)|<\infty.\]
    Then Tonelli's theorem implies $\sum_n |H_n(D,\varepsilon, \delta, T)|<\infty$ almost surely. In particular $|H_n(D,\varepsilon, \delta, T)|\to 0$ with probability one. Intersecting these probability-one events over all $D\in \mathcal{D}:=\{B(a,R):a\in\Q+i\Q,\,R\in \Q_{>0}\},\, \varepsilon\in \Q_{>0},\,\delta\in\Q\cap(0,1/2)$, and all integers $T\ge T_0(D,\varepsilon, \delta)$, we obtain a single probability-one event on which 
    \[|H_n(D,\varepsilon, \delta, T)|\to 0\]
    for every such tuple.
    
    On the other hand, suppose on this event that a nonzero gap occurs, i.e. along a subsequence $\varphi_n\to \varphi$, $\psi_n\to\psi$, $\psi\le \varphi$ and $\psi\ne \varphi$, by Lemma~\ref{lem:spatial-gap-block}, there are $D,\,\varepsilon\,,\delta\,,E\,,E_n$ such that $|E_n|\ge |E|/2$ for all sufficiently large $n$ and the derivative block in $B_n(z)$ holds on $E_n$ for all large $n$. Removing from $E$ and $E_n$ the fixed countable set of atoms of $\mu$ does not change their planar Lebesgue measure. Choose a rational disk $D'\in \mathcal{D}$ containing $D$, together with rationals $0<\varepsilon'<\varepsilon$ and $0<\delta'<\min\{\delta,1/2\}$. The same block event $B_n(z)$ holds for this weaker tuple. Then choose an integer $T\ge T_0(D',\varepsilon',\delta')$ sufficiently large that \eqref{eq:bad-size-bound} gives $|D'\setminus G_T|<|E|/4$. Then 
    \[|H_n(D',\varepsilon',\delta',T)|\ge |E_n\cap G_T|\ge |E_n|-|D'\setminus G_T|\ge |E|/4\]
    for large $n$, contradicting the almost-sure conclusion above. Thus no nonzero gap can occur.
\end{proof}

\section{The polar-carried case: rigidity}\label{sec:polar}
We use below the removable-singularity theorem for subharmonic functions:

\begin{proposition}\cite[Theorem~3.6.1]{Ran95}\label{prop:polar-removability}
    Let $D$ be an open subset of $\C$, let $K$ be a closed polar set, and let $v$ be a subharmonic function on $D\setminus K$. If $v$ is locally bounded above near $K$, i.e. each point of $K\cap D$ has a neighborhood $N$ such that $v$ is bounded above on $N\setminus K$, then it has a unique subharmonic extension to $D$.
\end{proposition}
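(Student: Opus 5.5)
This is the classical removable singularity theorem for subharmonic functions, cited from Ransford. I would prove it in three steps: uniqueness, a natural candidate for the extension, and a perturbation argument showing the candidate satisfies the submean inequality.

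\emph{Uniqueness.} By Lemma~\ref{lem:polar-facts}(3), $K$ has planar measure zero. Any subharmonic function $u$ on $D$ satisfies
\[u(z)=\lim_{r\to0}\frac{1}{\pi r^2}\int_{B(z,r)}u\,d^2w,\]
because the submean inequality gives $\le$ and upper semicontinuity gives $\ge$. The right side depends only on $u$ almost everywhere, hence only on $v$ on $D\setminus K$. So two extensions must agree everywhere.

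\emph{Candidate.} Work locally. Subharmonicity is a local property, so it suffices to fix a disk $N$ around a point of $K\cap D$ with $v\le M$ on $N\setminus K$. Define
\[\tilde v(z):=\limsup_{w\to z,\ w\notin K}v(w)\qquad (z\in K\cap N),\]
and $\tilde v=v$ off $K$. This is well defined because $K$ is closed and polar, so it has empty interior and $N\setminus K$ is dense. It is upper semicontinuous and bounded above by $M$ near $K$.

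\emph{Submean inequality via a perturbation.} I would use polarity through an auxiliary function. Since $K\cap\overline{N'}$ is a compact polar set for a slightly smaller disk $N'$, the standard characterization gives a subharmonic $h$ on a neighbourhood with $h=-\infty$ on $K\cap\overline{N'}$ and $h\not\equiv-\infty$. This is the converse direction to Lemma~\ref{lem:polar-facts}(2): one can take the potential $p_\eta$ of a measure $\eta$ built from measures on $K$ with large energy (an Evans-type potential). Replacing $h$ by $h-\sup h$, we may assume $h\le0$. For $\epsilon>0$ set $u_\epsilon:=\tilde v+\epsilon h$. On $N'\setminus K$ this is subharmonic. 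Near $K$ it tends to $-\infty$, because $\tilde v\le M$ and $h\to-\infty$ as the point approaches $K$, which uses the upper semicontinuity of $h$ together with $h=-\infty$ on $K$. Hence $u_\epsilon$ is subharmonic on $N'$: at points of $K$ the submean inequality holds trivially since the value there is $-\infty$, and elsewhere it holds because $u_\epsilon$ is subharmonic on the open set $N'\setminus K$. The submean inequality is a local criterion, so the local version suffices. As $\epsilon\downarrow0$, the functions $u_\epsilon$ increase to $\tilde v$ off the polar set $\{h=-\infty\}$, which has measure zero.

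\emph{Passing to the limit.} For $\overline{B(z,r)}\subset N'$ with $z\notin\{h=-\infty\}$, monotone convergence gives the submean inequality for $\tilde v$ at $z$. At the remaining points, including points of $K$, I would instead verify $\tilde v(z)\le$ its disk averages. The averages of $\tilde v$ are limits of the averages of $u_\epsilon$, and these averages are attained off a null set. So it suffices to show the standard fact that a function that is upper semicontinuous, bounded above, and equal almost everywhere to the increasing limit of subharmonic functions is subharmonic once its values at the exceptional points are given by the essential limsup regularization. That is exactly how $\tilde v$ was defined, since $\tilde v(z)=\limsup\tilde v$ and averages over small disks dominate the limsup.

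The main obstacle is producing $h$, the subharmonic function that equals $-\infty$ on the given compact polar set. That is the converse of Lemma~\ref{lem:polar-facts}(2) and is not stated earlier in the paper. I would simply cite it from \cite{Ran95} (Evans' theorem, or its weaker version with $h=-\infty$ on $K$), consistent with the paper treating this proposition as imported from Ransford.
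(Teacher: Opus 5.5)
The paper gives no proof of this proposition; it imports it from Ransford. Your sketch is the classical perturbation argument. Uniqueness comes from $|K|=0$. You then take a subharmonic $h\le 0$ with $h=-\infty$ on $K$, so that $u_\epsilon=\tilde v+\epsilon h$ is subharmonic across $K$, and let $\epsilon\downarrow 0$. It is sound in outline. Compared with the bare citation, it shows that the only potential-theoretic input beyond Lemma~\ref{lem:polar-facts} is the converse of Lemma~\ref{lem:polar-facts}(2) for compact polar sets, which the paper never states.

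Your parenthetical on how to obtain $h$ is not a construction, though. Every probability measure on a compact polar set already has infinite energy. Moreover, a single $\eta$ carried by $K$ with $p_\eta=-\infty$ on all of $K$ is exactly Evans' theorem, which is deep. The elementary route is different:
\begin{itemize}
\item take compact neighbourhoods $K_j\downarrow K\cap\overline{N'}$ with $\operatorname{cap}(K_j)\le e^{-4^j}$, and let $\nu_j$ be their equilibrium measures;
\item Frostman's theorem gives $p_{\nu_j}=\log\operatorname{cap}(K_j)$ on $\operatorname{int}K_j\supset K\cap\overline{N'}$;
\item then $h=\sum_j 2^{-j}(p_{\nu_j}-c)$ works for a suitable constant $c$.
\end{itemize}

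The step that is genuinely thin is the last one. At $z\in K$, the phrase ``averages over small disks dominate the limsup,'' read literally, is the submean inequality you are trying to prove. The ``standard fact'' you reduce to is asserted rather than checked. It closes cleanly as follows.
\begin{itemize}
\item Let $g:=\lim_{\epsilon\downarrow 0}u_\epsilon$ and let $s:=g^*$ be its upper semicontinuous regularization.
\item Each $u_\epsilon$ satisfies the submean inequality on every disc in $N'$, hence so does $g$.
\item Since $g\le M$, Fatou's lemma shows that $w\mapsto\int_{B(w,r)}g\,d^2\xi$ is upper semicontinuous. So $s(z)=\limsup_{w\to z}g(w)$ is bounded by the average of $g$ over $B(z,r)$, hence by the average of $s$. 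Thus $s$ is subharmonic on $N'$.
\item Since $h\le 0$ and $\tilde v$ is upper semicontinuous, $g\le s\le\tilde v$. Also $g=\tilde v$ off the null set $\{h=-\infty\}$, so $s=\tilde v$ almost everywhere.
\item Your own uniqueness argument, applied on the open set $N'\setminus K$, gives $s=v$ there.
\item For $z\in K\cap N'$, upper semicontinuity of $s$ gives $s(z)\ge\limsup_{w\to z,\,w\notin K}v(w)=\tilde v(z)$. Together with $s\le\tilde v$, this yields $s(z)=\tilde v(z)$.
\end{itemize}
Hence $\tilde v=s$ is subharmonic on $N'$. This handles the points of $K$ and the points where $h=-\infty$ at once, with no appeal to essential-limsup normalizations.
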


\begin{lemma}\label{lem:polar-rigidity}
    Let $\mu$ be a probability measure on $\C$ carried by a polar Borel set, and let $\beta$ be a positive Radon measure with $\beta(\C)\le 1$. If $u\in L^1_{\rm loc}(\C)$ satisfies
    \[u\le 0\quad\text{ almost everywhere,}\qquad \widetilde{\Delta}u=\beta-\mu,\]
    then 
    \[\beta=\mu.\]
\end{lemma}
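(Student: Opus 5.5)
The plan is to prove the inequality $\mu\le\beta$ as measures. The conclusion then follows from a mass count: $\beta-\mu$ would be a positive measure with total mass $\beta(\C)-\mu(\C)\le 1-1=0$, hence zero. To get $\mu\le\beta$, I will exhaust the polar carrier by compact sets and apply the removable-singularity theorem, Proposition~\ref{prop:polar-removability}, one compact set at a time. The bound $u\le0$ is what makes each removable-singularity step legal.

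\textbf{Step 1 (compact carriers).} Let $E$ be a polar Borel set with $\mu(E)=1$. By inner regularity, choose compact sets $K_j\subset E$ with $\mu(K_j)\uparrow 1$. Each $K_j$ is closed and polar, since $\operatorname{cap}(K_j)\le\operatorname{cap}(E)=0$. It therefore suffices to show $\mu|_{K_j}\le\beta$ for each $j$, and then let $j\to\infty$.

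\textbf{Step 2 (localization).} Fix $j$ and a disk $B=B(0,R)$, and let $B'=B(0,2R)$. Set $\mu_j:=\mu|_{K_j}$ and $\eta:=(\mu-\mu_j)|_{B'}$, a finite positive compactly supported measure. Put
\[
w:=u+p_\eta\quad\text{on }B'.
\]
By Lemma~\ref{lem:polar-facts}(1), $p_\eta$ is subharmonic and $\widetilde{\Delta}p_\eta=\eta$. Hence on $B'$, in the sense of distributions,
\[
\widetilde{\Delta}w=\beta-\mu+\eta=\beta-\mu_j .
\]
On the open set $B'\setminus K_j$ the right-hand side is $\beta\ge0$. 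By Proposition~\ref{prop:subharmonic-tools}(1), $w$ therefore has a subharmonic representative $\tilde w$ on $B'\setminus K_j$.

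\textbf{Step 3 (upper bound near $K_j$).} The a.e. inequality $w\le p_\eta$ holds because $u\le0$ a.e. Since $p_\eta$ is bounded above on compact sets, $w$ is a.e. bounded above on $B(0,3R/2)$ by some constant $C$. The submean inequality for $\tilde w$, applied over small disks, upgrades this to $\tilde w\le C$ everywhere on $B(0,3R/2)\setminus K_j$. So $\tilde w$ is locally bounded above near $K_j\cap B(0,3R/2)$.

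\textbf{Step 4 (removal and conclusion).} Proposition~\ref{prop:polar-removability} extends $\tilde w$ subharmonically to all of $B(0,3R/2)$. Since $K_j$ has Lebesgue measure zero by Lemma~\ref{lem:polar-facts}(3), this extension equals $w$ a.e. Hence, as distributions on $B(0,3R/2)$,
\[
\beta-\mu_j=\widetilde{\Delta}w\ge0 .
\]
As $R$ is arbitrary, $\mu_j\le\beta$ on $\C$. Letting $j\to\infty$ (monotone convergence) gives $\mu\le\beta$, and the mass argument above gives $\beta=\mu$.

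\textbf{Main obstacle.} The step needing the most care is Step 3, together with the bookkeeping around it. The issue is that $u$ is only an $L^1_{\rm loc}$ class, so "bounded above near $K$" must be checked for the subharmonic representative, not for $u$ itself; the submean inequality handles this. In addition, $\mu$ need not be compactly supported, which is why $\eta$ is localized to $B'$ rather than taken as the full $\mu-\mu_j$. One also has to confirm that the distributional identity for $\widetilde{\Delta}w$ survives the passage from the a.e. class of $w$ to its subharmonic extension.
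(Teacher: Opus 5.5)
Your proposal is correct and follows essentially the same route as the paper: you add a localized logarithmic potential of the part of $\mu$ off a compact polar set $K$, use $u\le 0$ and the submean inequality to bound the subharmonic representative above near $K$, remove the polar singularity to get $\beta\ge\mu|_K$, then exhaust the polar carrier by compacts and finish with the mass count. The differences are cosmetic. The paper normalizes the kernel by $M_D>\operatorname{diam}(D)$ so that $w\le 0$ directly, and it takes the disk to contain $K$. You instead use an unnormalized $p_\eta$ with a local constant upper bound on a slightly smaller disk and let $R\to\infty$.
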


\begin{proof}
    Let $E$ be a polar Borel set with $\mu(E)=1$. Fix a compact set $K\subset E$. Choose a bounded disk $D$ containing $K$, and put $\rho:=\mu|_{D\setminus K}$. Choose $M_D>\operatorname{diam}(D)$ and define
    \[G_\rho(z):=\int_D\log\frac{|z-q|}{M_D}\, d\rho(q),\qquad z\in D.\]
    The logarithmic kernel is locally integrable. Indeed, for every compact $A\subset D$, with Tonelli's theorem we have
    \[\int_A|G_\rho(z)|\,d^2z
    \le \int_D\int_A\log\frac{M_D}{|z-q|}\,d^2z\,d\rho(q)
    \le \rho(D)\int_{B(0,M_D)} \log\frac{M_D}{|u|}\,d^2u
    <\infty.\]
    Thus $G_\rho\in L^1_{\mathrm{loc}}(D)$. Moreover, $G_\rho\le 0$ almost everywhere on $D$ and $\widetilde{\Delta}G_\rho=\rho$. Hence $w:=u|_D+G_\rho\le 0$ almost everywhere on $D$ and 
    \[\widetilde{\Delta}w=(\beta|_D-\mu|_D)+\rho=(\beta|_D-\mu|_D)+(\mu|_D-\mu|_K)=\beta|_D-\mu|_K.\]
    On $D\setminus K$ this distribution is positive. Thus $w$ has a subharmonic representative there by Proposition~\ref{prop:subharmonic-tools}(1). Let's still denote it by $w$. Since it agrees almost everywhere with the original function, $w\le0$ almost everywhere on $D\setminus K$. For any $z_0\in D\setminus K$, choose $r>0$ such that $\overline{B(z_0,r)}\subset D\setminus K$. The mean value inequality gives
    \[w(z_0)\le\frac1{\pi r^2}\int_{B(z_0,r)}w\,d^2z \le 0.\]
    Thus $w\le0$ everywhere on $D\setminus K$, and in particular it is locally bounded above near $K$. Proposition~\ref{prop:polar-removability} extends its subharmonicity across $K$.

    By Lemma~\ref{lem:polar-facts}(3), $K$ has planar area zero. Hence the subharmonic extension agrees almost everywhere with the original function $w$ and therefore represents the same element of $L^1_{\mathrm{loc}}(D)$. Consequently, in the sense of distributions on $D$, $\widetilde{\Delta}w=\beta|_D-\mu|_K$. Since the extension is subharmonic on $D$, its normalized distributional Laplacian $\widetilde{\Delta}w$ is positive. Therefore $\beta|_D\ge\mu|_K$. Since $\mu|_K$ is supported in $D$, we moreover have $\beta\ge\mu|_K$ as measures on $\C$. We have therefore proved this inequality for every compact set $K\subset E$.

    By inner regularity and $\mu(E)=1$, choose compact sets $C_m\subset E$ such that $\mu(C_m)>1-2^{-m}$, and set $K_m:=C_1\cup\cdots\cup C_m$. Each $K_m$ is compact and polar by Lemma~\ref{lem:polar-facts}(4), so the preceding argument gives $\beta\ge\mu|_{K_m}$ for every $m$. The sets $K_m$ are increasing and $\mu(K_m)\to1$. Hence, for every Borel set $A\subset\C$,
    \[\beta(A)\ge\mu(A\cap K_m)\longrightarrow\mu(A).\]
    Thus $\beta\ge\mu$. Since $1=\mu(\C)\le\beta(\C)\le1$, we have $\beta(\C)=\mu(\C)$. The positive measure $\beta-\mu$ therefore has total mass zero, so we must have $\beta=\mu$.
\end{proof}

\section{Proof of the main theorem}\label{sec:main-thm}
For $0\le k<n$, define the $n$-normalized derivative-zero measure
\[\widetilde\mu_{n,k}:=\frac{1}{n}[Z(P_n^{(k)})]=\frac{n-k}{n}\mu_{n,k}.\]
Thus $\widetilde\mu_{n,k}(\C)=\frac{n-k}{n}\le 1$.
\begin{proof}[Proof of Theorem~\ref{thm:main}]
    Let $\Omega_0$ be the probability-one event on which
    \[\alpha_n:=\frac{1}{n}[Z(P_n)]=\frac{1}{n}\sum_{j=1}^n\delta_{X_j} \weak\mu.\]
    If $\mu$ is not polar-carried, Proposition~\ref{prop:capacity-equivalence} supplies $a\in(0,1]$ and a compactly supported probability measure $\lambda$ such that
    \[a\lambda\le\mu,\qquad L_\lambda<\infty.\]
    Thus Assumption~\ref{assump} holds, and Corollary~\ref{cor:no-affine-gap} supplies another probability-one event. In this case, replace $\Omega_0$ by its intersection with that event. Fix an outcome $\omega\in\Omega_0$.
    
    We first carry out the part of the argument common to both capacity alternatives. Fix an arbitrary subsequence. Since $\tilde\mu_{n,k_n}$ is a positive measure of total mass $(n-k_n)/n\le 1$, vague sequential compactness gives a further subsequence, which we relabel, such that $\tilde\mu_{n,k_n}\to \beta$ vaguely on $\C$ for some positive Radon measure $\beta$ with $\beta(\C)\le 1$. 

    Consider directly the two normalized logarithmic potentials appearing in Lemma~\ref{lem:spatial-gap-block}
    \[\varphi_n(z):=\frac{1}{n}\log\frac{|P_n(z)|}{\|P_n\|_{\mathrm{c},n}},\qquad \psi_n(z):=\frac{1}{n}\log\frac{|P^{(k_n)}_n(z)|}{(n)_{k_n}\|P_n\|_{\mathrm{c},n}}.\]
    We first extract joint $L_{\rm loc}^1$-limits of these functions. The sequence $(\varphi_n)$ is relatively compact in $L_{\rm loc}^1(\C)$ by Lemma~\ref{lem:polynomial-compactness}. For $\psi_n$, write 
    \[\psi_n(z)=\frac{1}{n}\log\left|\frac{\mathcal{D}_{n,k_n}P_n(z)}{\|\mathcal{D}_{n,k_n}P_n\|_{\mathrm{c},n}}\right|+d_n,\qquad\text{where }d_n:=\frac{1}{n}\log\frac{\|\mathcal{D}_{n,k_n}P_n\|_{\mathrm{c},n}}{\|P_n\|_{\mathrm{c},n}}.\]
    The first term is again relatively compact in $L_{\rm loc}^1(\C)$ by Lemma~\ref{lem:polynomial-compactness}, while Lemma~\ref{lem:no-escape} says that $(d_n)$ is bounded. Passing to a further subsequence, we may therefore assume that $\varphi_n\to \varphi$ and $\psi_n\to \psi$ in $L_{\rm loc}^1(\C)$ for some $\varphi,\psi\in L_{\rm loc}^1(\C)$. The ratio identity gives almost everywhere \[\psi_n-\varphi_n=\frac{1}{n}\log|R_{n,k_n}|.\]
    Let $K\subset \C$ be compact. Since the positive-part map is $1$-Lipschitz, together with Lemma~\ref{lem:positive} yields
    \[\int_K(\psi-\varphi)^+\,d^2z
    =\lim_{n\to\infty}\int_K(\psi_n-\varphi_n)^+\,d^2z
    =\lim_{n\to\infty}\frac{1}{n}\int_K\logp|R_{n,k_n}(z)|\,d^2z
    \le \lim_{n\to\infty} C_K\frac{k_n}{n}=0.\]
    Since $K$ was arbitrary, it follows that $\psi\le \varphi$ almost everywhere on $\C$. 

    We next identify the normalized distributional Laplacians of the two limits. The normalization constants in the definitions of $\varphi_n$ and $\psi_n$ are independent of $z$, so $\widetilde{\Delta}\varphi_n=\alpha_n$ and $\widetilde{\Delta}\psi_n=\tilde\mu_{n,k_n}$. Because $\varphi_n\to \varphi$ and $\psi_n\to \psi$ in $L_{\rm loc}^1$, we may apply $\widetilde{\Delta}$ in the sense of distributions. Using the weak convergence $\alpha_n\weak \mu$ and the vague convergence $\tilde\mu_{n,k_n}\to \beta$, we obtain $\widetilde{\Delta}\varphi=\mu$ and $\widetilde{\Delta}\psi=\beta$. 
    Set
    \[u:=\psi-\varphi.\]
    Then $u\le 0$ almost everywhere, and $\widetilde{\Delta}u=\beta-\mu$.

    \medskip
    \noindent
    \emph{Polar-carried case.}
    If $\mu$ is polar-carried, Lemma~\ref{lem:polar-rigidity} applied to $u$ gives $\beta=\mu$.

    \medskip
    \noindent
    \emph{Non-polar-carried case.}
    If $\psi\neq \varphi$ as elements of $L_{\rm loc}^1(\C)$, then the chosen subsequence satisfies
    \[\varphi_n\to \varphi,\qquad \psi_n\to \psi, \qquad \psi\le \varphi,\qquad \psi\neq \varphi,\]
    contradicting Corollary~\ref{cor:no-affine-gap}. Therefore $\psi=\varphi$ almost everywhere. Applying $\widetilde{\Delta}$ gives $\beta=\widetilde{\Delta}\psi=\widetilde{\Delta}\varphi=\mu$. 
    
    Thus, in either case, the vague limit $\beta$ is equal to $\mu$. Since the original subsequence was arbitrary, every subsequence of $(\tilde\mu_{n,k_n})$ has a further subsequence converging vaguely to $\mu$. It follows from the subsequence criterion that
    \[\tilde\mu_{n,k_n}\to\mu \qquad\text{vaguely on }\C.\]
    Moreover, $\tilde\mu_{n,k_n}(\C)=\frac{n-k_n}{n}\to 1=\mu(\C)$. Hence vague convergence, together with convergence of the total masses, implies
    \[\tilde\mu_{n,k_n}\weak\mu.\]
    
    Finally, since $\mu_{n,k_n}=\frac{n}{n-k_n}\tilde\mu_{n,k_n}$ and $\frac{n}{n-k_n}\to 1$, for every $f\in C_b(\C)$,
    \[\int_\C f\,d\mu_{n,k_n}=
    \frac{n}{n-k_n}\int_\C f\,d\tilde\mu_{n,k_n}\to
    \int_\C f\,d\mu.\]
    Consequently,
    \[\mu_{n,k_n}\weak\mu.\]
    
    Since the outcome fixed at the beginning of the proof was arbitrary in a probability-one event, this convergence holds almost surely.

\end{proof}

\section*{Acknowledgments}

This work was partially supported by the Research Experience for Undergraduates and Graduates program in the Department of Mathematics at the University of Colorado Boulder. The author thanks Sean O'Rourke for many helpful comments and suggestions on earlier versions of the manuscript, and Eskil Irgens for providing useful feedback.

\bibliographystyle{amsalpha}
\bibliography{sublinear_derivative_zeros}

\end{document}